\newtheorem{theorem}{Theorem}[section]
\newtheorem{lemma}[theorem]{Lemma}
\newtheorem{proposition}{Proposition}
\theoremstyle{definition}
\newtheorem{definition}[theorem]{Definition}
\newcommand{\Rd}{{\mathbb{R}^d}}
\newcommand{\calH}{{\mathcal{H}}}
\newcommand{\eps}{\varepsilon}
\newcommand{\Rdrei}{{\mathbb{R}^3}}
\newcommand{\W}{\mathbf{W}}
\newcommand{\mom}{\mathbf{m}_2}
\newcommand{\N}{{\mathbb{N}}}
\newcommand{\R}{{\mathbb{R}}}
\newcommand{\flow}[2]{\mathsf{S}_{#1}^{#2}}
\newcommand{\eins}{\mathds{1}}
\newcommand{\dd}{\,\mathrm{d}}
\newcommand{\dn}{\mathrm{d}}
\newcommand{\dv}{\mathrm{div}}
\newcommand{\dff}{\mathrm{D}}
\newcommand{\prb}{\mathscr{P}_2}
\newcommand{\auxil}{\mathcal{A}}
\newcommand{\metr}{\mathbf{d}}
\newcommand{\X}{\mathbf{X}}
\newcommand{\lyp}{\mathcal{L}}
\newcommand{\aU}{\mathcal{U}}
\newcommand{\aV}{\mathcal{V}}
\newcommand{\ent}{\mathcal{E}}
\newcommand{\krnl}{\mathbf{G}}
\newcommand{\dsp}{\mathcal{D}}
\newcommand{\ball}{\mathbb{B}}
\title[Poisson-Nernst-Planck-type system]{Exponential convergence to equilibrium in a Poisson-Nernst-Planck-type system with nonlinear diffusion}
\author[Jonathan Zinsl]{}
\subjclass[2010]{Primary: 35K45; Secondary: 35A15, 35B40, 35D30, 35Q60}
 \keywords{Poisson-Nernst-Planck model, Wasserstein metric, gradient flow, exponential convergence, flow interchange lemma}
 \email{zinsl@ma.tum.de}
\thanks{This research has been supported by the German Research Foundation (DFG), SFB TR 109.}
\begin{document}

%The abstract of your paper
\begin{abstract}
We investigate a Poisson-Nernst-Planck type system in three spatial dimensions where the strength of the electric drift depends on a possibly small parameter and the particles are assumed to diffuse quadratically. On grounds of the global existence result proved by Kinderlehrer, Monsaingeon and Xu (2015) using the formal Wasserstein gradient flow structure of the system, we analyse the long-time behaviour of weak solutions. We prove under the assumption of uniform convexity of the external drift potentials that the system possesses a unique steady state. If the strength of the electric drift is sufficiently small, we show convergence of solutions to the respective steady state at an exponential rate using entropy-dissipation methods.
\end{abstract}

\maketitle

% Enter the first author's name and address:
\centerline{\scshape Jonathan Zinsl}
\medskip
{\footnotesize
% please put the address of the first author
 \centerline{Technische Universit\"at M\"unchen}
\centerline{Zentrum f\"ur Mathematik}
   \centerline{Boltzmannstr. 3}
   \centerline{85747 Garching, Germany}
} % Do not forget to end the {\footnotesize by the sign }

\medskip

\section{Introduction}\label{sec:intro}
We are concerned with the long-time behaviour of the following parabolic system in three spatial dimensions:
\begin{align}
\partial_t u &= \dv(u\dff(2u+U+\eps \psi)),\label{eq:pnp_u}\\
\partial_t v &= \dv(v\dff(2v+V-\eps \psi)),\label{eq:pnp_v}\\
-\Delta\psi &= u-v,\label{eq:poi}
\end{align}
for \emph{nonnegative} $u,v:[0,\infty)\times\Rdrei\to [0,\infty]$, together with an initial condition $u(0,\cdot)=u^0\ge 0$, $v(0,\cdot)=v^0\ge 0$ on $\Rdrei$. Equations \eqref{eq:pnp_u}\&\eqref{eq:pnp_v} are coupled by means of Poisson's equation \eqref{eq:poi} giving
\begin{align*}
\psi&=\krnl\ast(u-v),
\end{align*}
with \emph{Newton's potential} $\krnl$ in $\Rdrei$, i.e.
\begin{align*}
\krnl(x)=\frac1{4\pi |x|}\quad\text{for }x\neq 0.
\end{align*}
For the external confinement potentials $U,V\in C^2(\Rdrei)$, we assume that they grow quadratically:
\begin{align*}
&\text{Boundedness of second derivatives:}\quad\|\dff^2 U\|_{L^\infty} <\infty,~ \|\dff^2 U\|_{L^\infty}<\infty,\\
&\text{Uniform convexity:}\quad\dff^2 U \ge \lambda_0\eins,~\dff^2 V \ge \lambda_0\eins,
\end{align*}
in the sense of symmetric matrices, for some $\lambda_0>0$.
We may assume without loss of generality that $U,V\ge 0$. Finally, $\eps>0$ is a fixed parameter.

System \eqref{eq:pnp_u}--\eqref{eq:poi} possesses a \emph{gradient flow} structure on the space $\X=\prb\times\prb$, where $\prb$ denotes the space of absolutely continuous probability measures on $\Rdrei$ with finite second moment, endowed with the metric
\begin{align*}
\metr((u,\tilde u),(v,\tilde v)):=\sqrt{\W_2(u,\tilde u)^2+\W_2(v,\tilde v)^2},
\end{align*}
where $\W_2$ is the $L^2$-Wasserstein metric. The corresponding energy functional $\ent:\X\to\R\cup\{+\infty\}$ reads
\begin{align*}
\ent(u,v):=\begin{cases}\int_\Rdrei (u^2+v^2+uU+vV+\frac{\eps}{2}|\dff \psi|^2)\dd x,& \text{if }(u,v)\in L^2\times L^2,\\ +\infty,&\text{otherwise.}\end{cases}
\end{align*}

It has been shown by Kinderlehrer \emph{et al.} \cite{kmx2015} that, given $(u^0,v^0)\in\X\cap (L^2\times L^2)$, a global-in-time weak solution $(u,v):[0,\infty)\to\X$ to \eqref{eq:pnp_u}--\eqref{eq:poi} exists and can be constructed as the continuous-time limit $\tau\to 0$ (in a sense to be specified below) of the minimizing movement scheme with step size $\tau>0$:
\begin{align}
  \label{eq:jko}
\begin{split}
(u_\tau^0,v_\tau^0)&:=(u^0,v^0),\\
  (u_\tau^n,v_\tau^n) &\in \operatorname*{argmin}_{(u,v)\in \X} \Big(\frac1{2\tau}\metr\big((u,v),(u_\tau^{n-1},v_\tau^{n-1})\big)^2 + \ent(u,v)\Big)\quad\text{for }n\in\N.
\end{split}
\end{align}
We will summarize important properties of the sequences $(u_\tau^n,v_\tau^n)_{n\in\N}$ and their limit $(u,v)$ in Section \ref{sec:pre} below.

In this work, we are interested in the behaviour of the aforementioned weak solution $(u,v)$ to system \eqref{eq:pnp_u}--\eqref{eq:poi} as $t\to\infty$. First, we characterize the set of equilibria:
\begin{theorem}[Existence and uniqueness of stationary states]
\label{thm:stat}
For every $\eps>0$, there exists a unique minimizer $(u_\infty,v_\infty)\in (W^{1,2}\times W^{1,2})$ of $\ent$ on $\X$. $(u_\infty,v_\infty)$ is a stationary solution to \eqref{eq:pnp_u}--\eqref{eq:poi} and satisfies
\begin{align}
u_\infty&=\frac12[C_u-U-\eps\psi_\infty]_+,\label{eq:uinf}\\
v_\infty&=\frac12[C_v-V+\eps\psi_\infty]_+,\label{eq:vinf}\\
\psi_\infty&:=\krnl\ast(u_\infty-v_\infty),\nonumber
\end{align}
where $C_u,C_v\in\R$ are such that $\|u_\infty\|_{L^1}=1=\|v_\infty\|_{L^1}$; $[\cdot]_+$ denoting the positive part. For every $\alpha\in(0,1)$, $u_\infty,v_\infty\in C^{0,\alpha}$ with compact support and $\psi\in L^\infty\cap C^{2,\alpha}$.
\end{theorem}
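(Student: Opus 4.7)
The plan is to establish the theorem in three stages---existence and uniqueness of the minimizer, derivation of the pointwise characterization \eqref{eq:uinf}--\eqref{eq:vinf}, and regularity---with stationarity a by-product of the second stage.

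For \emph{existence}, I would apply the direct method to a minimizing sequence $(u_n,v_n)_{n\in\N}\subset\X$. Uniform a-priori bounds come from the four non-negative contributions to $\ent$: the quadratic terms $\int u_n^2+\int v_n^2$ yield boundedness in $L^2(\Rdrei)$, while the uniform convexity $\dff^2 U,\dff^2 V\geq\lambda_0\eins$ combined with $U,V\geq 0$ produces quadratic minorants $U(x)\geq\frac{\lambda_0}{2}|x-x_U|^2$ and $V(x)\geq\frac{\lambda_0}{2}|x-x_V|^2$ for suitable $x_U,x_V\in\Rdrei$, so that the bound on $\int u_n U+\int v_n V$ forces equi-boundedness of the second moments and hence tightness of $(u_n),(v_n)$. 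Extracting a subsequence, $u_n\rightharpoonup u_\infty$, $v_n\rightharpoonup v_\infty$ weakly in $L^2(\Rdrei)$ and narrowly as probability measures. Weak lower semi-continuity of $\ent$ follows term-by-term: convexity handles $\int u^2+\int v^2$; the narrow Fatou/Portmanteau lemma handles $\int uU+\int vV$ (using continuity and non-negativity of $U,V$); and the non-local self-energy $\frac{\eps}{2}\langle u-v,\krnl\ast(u-v)\rangle$ is a non-negative, continuous quadratic form on $L^{6/5}(\Rdrei)$ (finiteness by Hardy-Littlewood-Sobolev) and hence weakly lsc. \emph{Uniqueness} is then immediate from strict convexity of $\ent$ along linear interpolations: the $L^2$-terms are strictly convex, the drift terms are linear, and the Newtonian interaction is convex because $\krnl$ is the positive-definite kernel of $(-\Delta)^{-1}$.

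For the Euler-Lagrange characterization and stationarity, I would test minimality of $(u_\infty,v_\infty)$ against admissible perturbations $t\mapsto((1-t)u_\infty+t\tilde w,(1-t)v_\infty+t\tilde z)$ for $(\tilde w,\tilde z)\in\X$, $t\in[0,1]$, which preserve both non-negativity and unit mass. Differentiating at $t=0^+$ yields the variational inequalities
\begin{align*}
\int_\Rdrei(2u_\infty+U+\eps\psi_\infty)(\tilde w-u_\infty)\dd x&\geq 0,\\
\int_\Rdrei(2v_\infty+V-\eps\psi_\infty)(\tilde z-v_\infty)\dd x&\geq 0,
\end{align*}
valid for arbitrary $\tilde w,\tilde z\in\prb$. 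A standard KKT argument with Lagrange multipliers $C_u,C_v\in\R$ for the unit-mass constraints gives $2u_\infty+U+\eps\psi_\infty=C_u$ on $\{u_\infty>0\}$ and $\geq C_u$ on $\{u_\infty=0\}$, which is exactly \eqref{eq:uinf}; symmetrically \eqref{eq:vinf}. Uniqueness of $C_u,C_v$ follows from the strict monotonicity of $C\mapsto\|\frac12[C-U-\eps\psi_\infty]_+\|_{L^1}$. Stationarity in the sense of \eqref{eq:pnp_u}--\eqref{eq:poi} is then automatic, because $u_\infty\dff(2u_\infty+U+\eps\psi_\infty)=0$ a.e.\ on $\Rdrei$ (the gradient vanishes on $\{u_\infty>0\}$ and $u_\infty$ vanishes elsewhere), and analogously for $v_\infty$.

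For the regularity claims, a short bootstrap is enough: since $u_\infty,v_\infty\in L^1\cap L^2\subset L^{3/2}(\Rdrei)$, the weak Young inequality gives $\psi_\infty\in L^\infty(\Rdrei)$; combined with the coercivity of $U,V$, the formulae \eqref{eq:uinf}--\eqref{eq:vinf} then show $u_\infty,v_\infty\in L^\infty$ with compact support; classical Newton-potential estimates upgrade $\psi_\infty$ to $W^{2,p}$ for every $p<\infty$ and, via Calder\'on-Zygmund, to $C^{2,\alpha}$ for any $\alpha\in(0,1)$; finally \eqref{eq:uinf}--\eqref{eq:vinf} yield $u_\infty,v_\infty\in C^{0,1}\subset C^{0,\alpha}$ as positive parts of $C^{2,\alpha}$-functions. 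The principal technical difficulty, in my view, lies in the second stage: the non-negativity constraint interacts with the unit-mass constraint and with the non-local self-coupling through $\psi_\infty$, so one must both justify the pointwise passage from the variational inequalities to \eqref{eq:uinf}--\eqref{eq:vinf} and verify that the KKT system admits a consistent pair $(C_u,C_v)$; some care is also needed to ensure that the admissible perturbations remain in $\X$ (non-negativity, mass, and finite second moment).
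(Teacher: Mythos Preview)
Your approach matches the paper's almost step for step: direct method for existence (same $L^2$/second-moment bounds and the same term-by-term lower semicontinuity), strict convexity of $\ent$ along linear interpolations in $L^2\times L^2$ for uniqueness, variational inequalities against mass-preserving nonnegative perturbations for the Euler--Lagrange system, and a potential-theoretic bootstrap for regularity. Two minor remarks. First, the paper obtains the $W^{1,2}$ regularity of $(u_\infty,v_\infty)$ by a shortcut you may not have seen: it feeds the minimizer as initial datum into the JKO scheme and invokes Proposition~\ref{prop:minmov}, which asserts $W^{1,2}$ for every iterate; your alternative route via compactly supported Lipschitz functions is also fine. Second, your bootstrap skips a rung: from an $L^\infty$ (compactly supported) source, Calder\'on--Zygmund gives $\psi_\infty\in W^{2,p}$ for all $p<\infty$, hence $\psi_\infty\in C^{1,\alpha}$ by Morrey, but \emph{not} $C^{2,\alpha}$. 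You need one more pass---use \eqref{eq:uinf}--\eqref{eq:vinf} together with $\psi_\infty\in C^{1,\alpha}$ to get $u_\infty,v_\infty\in C^{0,\alpha}$, and only then apply Schauder to reach $\psi_\infty\in C^{2,\alpha}$. The paper does exactly this, first obtaining $\psi_\infty\in C^{0,\alpha}$ from $u_\infty-v_\infty\in L^6$, then $u_\infty,v_\infty\in C^{0,\alpha}$ from the formula, and finally $\psi_\infty\in C^{2,\alpha}$.
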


Second, we prove for sufficiently small coupling strength $\eps>0$ exponential convergence to $(u_\infty,v_\infty)$:
\begin{theorem}[Exponential convergence to equilibrium]
\label{thm:conv}
  There are constants $\bar\eps>0$ and $\bar L>0$ such that for all $\delta>0$, there exists $C_\delta>0$ such that the following is true
  for every $\eps\in(0,\bar\eps)$ and arbitrary initial conditions $(u^0,v^0)\in \X\cap(L^2\times L^2)$: 
  The weak solution $(u,v)$ to \eqref{eq:pnp_u}--\eqref{eq:poi} obtained as a limit of the scheme \eqref{eq:jko} converges to $(u_\infty,v_\infty)$
  exponentially fast with rate $\Lambda_\eps:=\lambda_0-\bar L\eps>0$ in the following sense:
  \begin{align}
    \label{eq:conv}
\begin{split}
    \W_2(u(t,\cdot),u_\infty) &+ \W_2(v(t,\cdot),v_\infty)+\|u(t,\cdot)-u_\infty\|_{L^2} + \|v(t,\cdot)-v_\infty\|_{L^2} \\
    &\le C_\delta\left(\ent(u_0,v_0)-\ent(u_\infty,v_\infty)+1\right)^{1+\delta}e^{-\Lambda_\eps t} \qquad \text{for all $t\ge0$}.
\end{split}
  \end{align}
\end{theorem}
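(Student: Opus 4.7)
The plan is a Bakry-\'Emery-type entropy-dissipation argument adapted to the Wasserstein gradient flow structure, executed at the discrete JKO level via the flow interchange technique of Matthes-McCann-Savar\'e. I would write $\ent=\auxil+\aV_\eps$, with
$$\auxil(u,v):=\int_\Rdrei(u^2+v^2+uU+vV)\dd x,\qquad \aV_\eps(u,v):=\frac{\eps}{2}\int_\Rdrei|\dff\psi|^2\dd x,$$
and observe that $\auxil$ decouples into two terms, each $\lambda_0$-geodesically convex on $\prb$ (quadratic internal energies are displacement convex, and $\dff^2U,\dff^2V\ge\lambda_0\eins$ makes the confinement terms $\lambda_0$-convex), so $\auxil$ is $\lambda_0$-geodesically convex on $\X$. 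The Poisson contribution $\aV_\eps$ is the only obstruction to full convexity of $\ent$, and should behave as a perturbation of order $\eps$.

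The core step is a discrete entropy-entropy dissipation inequality obtained from the flow interchange lemma applied to each JKO step \eqref{eq:jko}, with auxiliary flow $\flow{s}{\auxil}$ --- itself a product of two porous-medium-plus-drift EVI-$\lambda_0$ gradient flows on $\X$. This yields
$$\auxil(u_\tau^{n-1},v_\tau^{n-1})-\auxil(u_\tau^n,v_\tau^n)\ge\tau\,\dsp(u_\tau^n,v_\tau^n)-\tau\,R_\eps(u_\tau^n,v_\tau^n),$$
where $\dsp$ is the natural squared metric slope of $\ent$ and $R_\eps$ collects cross terms generated by the mismatch between the flows of $\auxil$ and of $\ent$. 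Combining with the Polyak-{\L}ojasiewicz estimate $|\nabla\auxil|^2\ge 2\lambda_0(\auxil-\min\auxil)$ for the $\lambda_0$-convex $\auxil$, together with a comparison between the minimizers of $\auxil$ and of $\ent$ costing another $O(\eps)$ times the energy gap, one arrives at a discrete Gr\"onwall inequality
$$\ent(u_\tau^n,v_\tau^n)-\ent(u_\infty,v_\infty)\le\bigl(1+2\tau(\lambda_0-\bar L\eps)\bigr)^{-1}\bigl(\ent(u_\tau^{n-1},v_\tau^{n-1})-\ent(u_\infty,v_\infty)\bigr),$$
valid for $\eps<\bar\eps$; iteration and passing $\tau\to 0$ then deliver exponential decay of the energy gap at rate $2\Lambda_\eps$.

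The estimate of $R_\eps$ is where the small-$\eps$ hypothesis enters quantitatively. Uniform-in-$n$ and $\tau$ bounds on $\|u_\tau^n\|_{L^p}$, $\|v_\tau^n\|_{L^p}$, inherited from the analysis of \cite{kmx2015}, feed into Newton-potential regularity to yield $L^\infty$-control of $\dff\psi_\tau^n$. Cauchy-Schwarz applied to terms of the form $\eps\int u\,\dff(2u+U)\cdot\dff\psi\dd x$ then produces $R_\eps\le\bar L\,\eps\,(\ent-\ent_\infty)$ with $\bar L$ depending only on these regularity bounds. Once the energy gap decays exponentially, a Talagrand-type inequality --- derivable from the effective $\Lambda_\eps$-convexity along generalized geodesics of the perturbed functional --- converts the bound into the desired $\W_2$-control, while the $L^2$-part of \eqref{eq:conv} follows by interpolating the Wasserstein convergence against uniform higher-regularity estimates for $u,v$; the harmless exponent $1+\delta$ absorbs the loss of constants in these interpolations.

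The principal obstacle is the uniform, discrete-level control of $R_\eps$: because $\krnl$ is singular in $\Rdrei$, the Wasserstein gradient of $\aV_\eps$ is a nonlinear, nonlocal operator whose displacement Hessian is neither signed nor obviously bounded. Deriving $R_\eps\le\bar L\,\eps\,(\ent-\ent_\infty)$ uniformly in $n$ and $\tau$ therefore requires a careful flow-interchange computation combined with sharp, $\eps$-independent uniform regularity for the JKO iterates, and I expect this step to consume most of the technical work.
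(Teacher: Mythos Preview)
Your overall strategy (flow interchange against a $\lambda$-convex auxiliary functional, then discrete Gr\"onwall) is the right one, but two concrete choices in your plan create genuine gaps. First, the auxiliary functional should not be the fully decoupled $\auxil$: its minimizer is \emph{not} $(u_\infty,v_\infty)$, so neither $\auxil(u,v)-\min\auxil$ nor the cross-term $R_\eps$ vanishes at equilibrium, and the Gr\"onwall loop cannot close to a decay-to-zero statement. The paper instead uses
\[
\lyp(u,v)=\int_\Rdrei\bigl[(u^2-u_\infty^2)+(v^2-v_\infty^2)+(u-u_\infty)(U+\eps\psi_\infty)+(v-v_\infty)(V-\eps\psi_\infty)\bigr]\dd x,
\]
which is still decoupled and $\lambda_\eps$-geodesically convex (because $\psi_\infty\in C^{2,\alpha}$ with $\|\dff^2\psi_\infty\|_{L^\infty}$ bounded), has $(u_\infty,v_\infty)$ as its minimizer, satisfies $\lyp\ge\|u-u_\infty\|_{L^2}^2+\|v-v_\infty\|_{L^2}^2$ (this is where the Euler--Lagrange equations \eqref{eq:uinf}--\eqref{eq:vinf} are used), and makes the cross-term involve $\dff(\psi-\psi_\infty)$ rather than $\dff\psi$. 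This last point also eliminates your need for $L^\infty$ control of $\dff\psi$ along the flow, which is \emph{not} provided by \cite{kmx2015}; only $\dff\psi_\infty\in L^\infty$ is used.

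Second, and more seriously, the single-step estimate $R_\eps\le\bar L\eps\,(\text{entropy gap})$ that you hope for is not available. The flow-interchange computation yields a cross-term $\frac{\eps}{2}\int(u+v)|\dff(\psi-\psi_\infty)|^2\dd x$, and H\"older plus Hardy--Littlewood--Sobolev bound this by $C\eps\,\lyp(u,v)\bigl(1+\lyp(u,v)\bigr)$ --- quadratic in $\lyp$ for large data. The paper therefore runs a two-stage argument: a first, cruder estimate of the cross-term (absorbing it into the good dissipation via \eqref{eq:L3D}) gives a Gr\"onwall inequality with an additive error $+\tau\eps M$, which shows that $\lyp$ becomes bounded by a universal constant $2M'$ after a waiting time $T_0\sim(1+2\delta)\log\bigl(\ent(u^0,v^0)-\ent(u_\infty,v_\infty)\bigr)$. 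Only once $\lyp\le 2M'$ does the cross-term become $\le C\eps\,\lyp$, and the clean exponential decay follows. This waiting time is precisely the origin of the exponent $1+\delta$ in \eqref{eq:conv}; it is not an interpolation artefact. Likewise, the $L^2$ part of \eqref{eq:conv} comes directly from the lower bound on $\lyp$, not from interpolating Wasserstein against higher regularity.
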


System \eqref{eq:pnp_u}-\eqref{eq:poi} may arise as a model for the dynamics of a system consisting of positively and negatively charged particles (e.g. ions) inside some electrically neutral surrounding medium (e.g. air, water). For further details on the mathematical modelling of those phenomena, we refer to the monographs \cite{markowichbook1990,juengelbook2001}. Here, both species are confined by means of external potentials $U$ and $V$ and are assumed to diffuse nonlinearily -- with a diffusive mobility depending linearily on the concentrations $u$ and $v$, respectively. We assume the Poisson coupling by means of equation \eqref{eq:poi} to be suitably weak ($\eps\ll 1$), i.e. the drift induced by electromagnetic force to be small. The quantity $\eps^{-1}\gg 1$ corresponds to a large relative permittivity (dielectric constant) of the surrounding medium. A similar system has been considered by Biler, Dolbeault and Markowich \cite{bilerdolbeault2001}. There, a \emph{time-dependent} coupling $\eps(t)$ was introduced, with the crucial assumption that $\eps(t)\to 0$ as $t\to\infty$, i.e. asymptotical damping of the electrostatic potential. Under relatively general requirements on spatial dimension, external potential and diffusive nonlinearity, convergence to equilibrium as $t\to\infty$ is proved for sufficiently regular solutions. Here, we do \emph{not} require asymptotical damping of the Poisson coupling, that is, the system at hand still constitutes a \emph{coupled} system even in the large-time limit $t\to\infty$. To the best of our knowledge, our rigorous result on exponential convergence of \emph{weak} solutions is novel in the case of genuinely nonlinear diffusion on multiple space dimensions, even in the small coupling regime $\eps\ll 1$. Partial results have been obtained in one spatial dimension \cite{difwunsch2008} or for space-dependent diffusion \cite{benabd2004} only.

In contrast to that, the case of linear diffusion has already been treated almost exhaustively. In the articles \cite{amt2000,bilerdolbeault2000,amtu2001} preceding \cite{bilerdolbeault2001}, it was shown that the rate of exponential convergence to equilibrium of the system without coupling, for uniformly convex potentials, is (almost) retained for coupled systems. There, the strategy of proof is mainly based on applications of \emph{generalized Sobolev inequalities} the derivation of which require the use of a \emph{Holley-Stroock}-type perturbation lemma. Seemingly, such a strategy might not be applicable in the setting of nonlinear diffusion. On the other hand, systems of the form above possess (at least formally) a \emph{gradient flow} structure (w.r.t. e.g. the $L^2$-Wasserstein distance) which also is of use for the analysis of the system -- and, in contrast, does \emph{not} at all require linear diffusion.

Variational techniques related to gradient flows in (transportation) metric spaces \cite{villani2003,savare2008} have been applied to a variety of evolution equations, e.g. in \cite{jko1998, carrillo2000, otto2001, cmv2003, carrillo2006contractions, carrillo2006, agueh2008, gianazza2009, matthes2009}. The variational \emph{minimizing movement scheme} \cite{jko1998} provides a key tool, in combination with generalized convexity assumptions on the respective free energy or driving entropy functionals \cite{mccann1997}, for the investigation of existence and long-time behaviour of solutions to nonlinear evolution equations with gradient structure. Recently, also genuine systems of equations were object of study in this context. Using the minimizing movement scheme on an appropriate metric space, existence of weak solutions has been proved in several cases, e.g. for Keller-Segel-type systems \cite{carrillo2012,blanchet2012,blanchet2014,zinsl2012,zinsl2014,zinsl2015} or others \cite{kinderlehrer2009,laurencot2011,zm2014,kmx2015}. Using the minimizing movement scheme to obtain convergence to equilibrium is rather novel in the case of genuine systems. The method applied here has first been used for Keller-Segel-type models in \cite{zinsl2014,zinsl2015} leading to similar results as Theorem \ref{thm:conv} here.

The basis of our strategy is the fact that the uncoupled system ($\eps=0$) defines a $\lambda_0$-contractive flow, since $\ent$ then is $\lambda_0$-convex along geodesics in $(\X,\metr)$. However, geodesic convexity of $\ent$ is lost if $\eps>0$. Still, one can prove (see \cite{kmx2015}) that a continuous flow in $\X$ of system \eqref{eq:pnp_u}--\eqref{eq:poi} exists and admits a unique steady state $(u_\infty,v_\infty)$ (see our Theorem \ref{thm:stat}). The cornerstone of the proof of Theorem \ref{thm:conv} is the existence of an auxiliary functional $\lyp$ which is ``close'' to $\ent$ (for both $\eps>0$ and $\eps=0$): First, $\lyp$ is $\lambda_\eps$-convex along geodesics in $\X$, with a slightly smaller convexity modulus $0<\lambda_\eps<\lambda_0$, and is decoupled in its arguments $u$ and $v$. Hence, known results on gradient flows for scalar porous-medium type equations apply for the auxiliary gradient flow $\flow{}{\lyp}$ of $\lyp$. This auxiliary flow can be used -- with the almost classical flow interchange technique from \cite{matthes2009} -- to estimate the dissipation of $\lyp$ along the continuous flow given by the free energy $\ent$. We seek to eventually apply Gronwall's lemma for $\lyp$.
Since $\ent$ and $\lyp$ differ by a ``small'' -- but non-convex -- functional, cross-terms occur in the entropy-dissipation estimate and have to be controlled by suitable \emph{a priori} estimates. For small coupling strength, we arrive in the end at an exponential estimate with an again smaller rate $0<\Lambda_\eps<\lambda_\eps<\lambda_0$.

Clearly, this strategy requires $\lambda_0>0$, i.e. uniform convexity of the external potentials $U$ and $V$, which is not needed for proving existence (see \cite{kmx2015}). As in \cite{zinsl2014}, we deal with quadratic diffusion only since the right dissipation estimates do not seem to be at hand in the general case. One last comment is due about the scaling of our exponential estimate \eqref{eq:conv} in Theorem \ref{thm:conv}: For \emph{uniformly contractive} gradient flows, one expects the difference of initial and final energy to enter the estimate with a power $1/2$, corresponding to $\delta=-1/2$. However, due to non-convexity of the free energy $\ent$, only positive $\delta$ can be obtained with our strategy here. Nevertheless, the initial condition only appears via its energy.\\

This paper is organized as follows: First, we recall general facts and definitions for gradient flows in metric spaces and a result on the global existence of solutions to the system at hand. In Section \ref{sec:stat}, we prove Theorem \ref{thm:stat} on existence and uniqueness of steady states. Section \ref{sec:auxent} is devoted to the introduction and investigation of the auxiliary entropy mentioned above. There, we also derive a central entropy-dissipation estimate for our forthcoming analysis, using the flow interchange technique. Finally, Theorem \ref{thm:conv} is proved in Section \ref{sec:conv}. There, we first derive an additional \emph{a priori} estimate on the auxiliary entropy holding for large times. In consequence, exponential convergence is proved.

%%%%%%%%%%%%%%%%%%%

%
\section{Preliminaries}\label{sec:pre}
\subsection{Geodesic convexity and gradient flows}
In this section, we will briefly mention relevant definitions and facts on gradient flows in metric spaces. For a more thorough presentation, we refer to \cite{savare2008, villani2003}.

Throughout this paper, $\dff$ and $\dff^2$ denote the spatial gradient and Hessian, respectively. By abuse of notation, we often identify an absolutely continuous measure with its Lebesgue density. A sequence $(\mu_n)_{n\in\N}$ of probability measures on $\R^d$ is said to \emph{converge narrowly} to some limit probability measure $\mu$ if for all continuous and bounded maps $\phi:\R^d\to\R$, one has
\begin{align*}
\lim_{n\to\infty}\int_\Rd \phi(x)\dd\mu_n(x)&=\int_\Rd \phi(x)\dd \mu(x).
\end{align*}
For the metric space $(\prb,\W_2)$, the following is true: A sequence $(\mu_n)_{n\in\N}$ in $\prb$ converges w.r.t. the $L^2$-Wasserstein distance $\W_2$ if and only if \emph{both} $\mu_n\rightharpoonup \mu$ narrowly \emph{and} the sequence of second moments converges:
\begin{align*}
\lim_{n\to\infty}\mom(\mu_n)=\mom(\mu),\quad\text{with }\mom(\rho):=\int_\Rd |x|^2\dd\rho(x)\text{ for }\rho\in\prb.
\end{align*}

A functional $\auxil:X\to\R\cup\{\infty\}$ defined on some metric space $(X,d)$ is called $\lambda$-\emph{geodesically convex}
for some $\lambda\in\R$ if for every $w_0,w_1\in X$ and $s\in [0,1]$, one has
\begin{align*}
 \auxil(w_s)\le (1-s)\auxil(w_0)+s\auxil(w_1)-\frac{\lambda}{2}s(1-s)d^2(w_0,w_1),
\end{align*}
where $w_s:\,[0,1]\to X,\,s\mapsto w_s$ is a \emph{geodesic} curve connecting $w_0$ and $w_1$.
We recall two important classes of $\lambda$-convex functionals (see e.g. \cite[Ch. 9.3]{savare2008}, {\cite[Thm. 5.15]{villani2003}}) on the space $(\prb,\W_2)$:

\begin{theorem}[Criteria for geodesic convexity on $(\prb,\W_2)$]\label{thm:crit_conv}
The following statements are true:
 \begin{enumerate}[(a)]
 \item Let $h\in C^0([0,\infty))$ be given, and define a functional $\mathcal{A}$ on $\prb$ by $\mathcal{A}(w):=\int_\Rd h(w(x))\dd x$ for $w\in\prb\cap L^1$.
   If $h(0)=0$ and $r\mapsto r^dh(r^{-d})$ is convex and nonincreasing on $(0,\infty)$,
 $\mathcal{A}$ is $0$-geodesically convex and lower semicontinuous in $(\prb,\W_2)$.
 \item Let a function $W\in C^0(\Rd)$ be given, and define a functional $\mathcal{A}(\mu):=\int_\Rd W\dd\mu$ for all $\mu\in\prb$.
   If $W$ is $\lambda$-convex for some $\lambda\in\R$,
 $\mathcal{A}$ is $\lambda$-geodesically convex in $(\prb,\W_2)$.
 \end{enumerate}
\end{theorem}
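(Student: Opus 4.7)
My plan is to reduce both parts to direct computations along the McCann geodesics in $(\prb,\W_2)$, whose structure is furnished by Brenier's theorem: for any $w_0,w_1\in\prb$ there exists a convex potential $\phi:\Rd\to\R$ such that $T:=\dff\phi$ pushes $w_0$ forward to $w_1$ optimally, and the unique constant-speed geodesic joining them is
\begin{align*}
w_s=(T_s)_\# w_0,\qquad T_s(x):=(1-s)x+sT(x),\qquad s\in[0,1].
\end{align*}

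For part (b) the argument is short: rewrite $\mathcal{A}(w_s)=\int_\Rd W(T_s(x))\dd w_0(x)$ and apply the pointwise $\lambda$-convexity of $W$ to obtain
\begin{align*}
W(T_s(x))\le(1-s)W(x)+sW(T(x))-\tfrac{\lambda}{2}s(1-s)|x-T(x)|^2.
\end{align*}
Integrating against $w_0$ and using $\int_\Rd |x-T(x)|^2\dd w_0(x)=\W_2(w_0,w_1)^2$ yields the required $\lambda$-convexity inequality.

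For part (a) I would apply the Monge--Amp\`ere change of variables $w_s(T_s(x))J(s,x)=w_0(x)$ a.e., with $J(s,x):=\det(\dff T_s(x))$, and introduce the auxiliary function $g(r):=r^d h(r^{-d})$. A short algebraic manipulation yields
\begin{align*}
\mathcal{A}(w_s)=\int_\Rd h\!\left(\frac{w_0(x)}{J(s,x)}\right)J(s,x)\dd x=\int_\Rd w_0(x)\,g\!\left(w_0(x)^{-1/d}J(s,x)^{1/d}\right)\dd x.
\end{align*}
The critical geometric input, and the step I expect to be the main obstacle, is the Brunn--Minkowski-type concavity of $s\mapsto J(s,x)^{1/d}=\det((1-s)I+sD^2\phi(x))^{1/d}$ on $[0,1]$; rigorously, this requires interpreting $D^2\phi$ in the Alexandrov sense and justifying the change of variables on the support of $w_0$, which is the technical heart of McCann's original argument. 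Once this concavity is granted, the composition of the nonincreasing convex function $g$ with the concave function $s\mapsto w_0(x)^{-1/d}J(s,x)^{1/d}$ is convex in $s$; integrating against $\dd x$ preserves convexity and gives $0$-geodesic convexity of $\mathcal{A}$.

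For the lower semicontinuity claim on $(\prb,\W_2)$, I would invoke the standard fact that convergence in $\W_2$ implies narrow convergence together with convergence of second moments, combined with a representation of $\mathcal{A}$ as the supremum of an increasing family of narrowly continuous functionals obtained through a Legendre-transform-type duality applied to $g$ (or, equivalently, through a truncation and approximation by bounded continuous integrands). Since the hypotheses $h(0)=0$ and the convexity/monotonicity of $g$ control both the behaviour near $r=0$ and at $r=\infty$, this duality argument produces the desired lower semicontinuity without further assumptions.
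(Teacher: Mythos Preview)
The paper does not supply its own proof of this theorem: it is stated as a recollection with citations to \cite[Ch.~9.3]{savare2008} and \cite[Thm.~5.15]{villani2003}, and no argument is given. Your sketch is correct and is precisely McCann's displacement-convexity computation that underlies the cited results---Brenier's map to parametrize the geodesic, the Monge--Amp\`ere change of variables, Minkowski's determinant inequality for the concavity of $s\mapsto\det((1-s)I+sD^2\phi)^{1/d}$, and the observation that a nonincreasing convex function composed with a concave one is convex; part (b) is the standard one-line pushforward computation. There is therefore nothing to compare against in the paper itself, and your proposal matches the arguments in the references the paper invokes.
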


As to the notion of \emph{gradient flow}, we use the following characterization:
\begin{definition}[$\kappa$-contractive flow]
 Let $\auxil:X\to\R\cup\{\infty\}$ be a lower semicontinuous functional on the metric space $(X,d)$.
 A continuous semigroup $\flow{}{\auxil}$ on $(X,d)$ is called \emph{$\kappa$-flow} for some $\kappa\in\R$,
 if the \emph{evolution variational estimate}
 \begin{align*}
   \frac{1}{2}\frac{\dn^+}{\dn t}d^2(\flow{t}{\auxil}(w),\tilde w)+\frac{\kappa}{2}d^2(\flow{t}{\auxil}(w),\tilde w)+\auxil(\flow{t}{\auxil}(w))
   &\le\auxil(\tilde w)
\end{align*}
 holds for arbitrary $w,\tilde w$ in the domain of $\auxil$, and for all $t\ge 0$.
\end{definition}
We recall some facts on gradient flows of convex functionals on $\prb$:
\begin{theorem}[Gradient flows of geodesically convex functionals on $(\prb,\W_2)$ \cite{savare2008}]\label{thm:gfcoll}
Let $\auxil:\prb\to\R\cup\{\infty\}$ be lower semicontinuous and $\lambda$-geodesically convex w.r.t. the distance $\W_2$. The following statements hold:
\begin{enumerate}[(a)]
\item There exists a unique $\kappa$-flow, with $\kappa:=\lambda$, for $\auxil$. Its corresponding evolution equation can be written as
\begin{align*}
   \partial_t \flow{t}{\auxil}(w) &=\mathrm{div}\left(\flow{t}{\auxil}(w) \dff \left(\frac{\delta\auxil}{\delta w}(\flow{t}{\auxil}(w))\right)\right),
 \end{align*}
if $\auxil$ is sufficiently regular. There, $\frac{\delta\auxil}{\delta w}$ stands for the usual first variation of the functional $\auxil$ on $L^2$.
\item There exists exactly one minimizer $w_{\min}$ of $\auxil$, for which the following holds:
\begin{align}
\frac{\lambda}{2}\W_2^2(w,w_{\min})&\le \auxil(w)-\auxil(w_{\min})\le \frac{1}{2\lambda}\lim_{h\searrow 0}\frac{\auxil(w)-\auxil(\flow{h}{\auxil}(w))}{h}.\label{eq:subdiff}
\end{align}
\end{enumerate}
\end{theorem}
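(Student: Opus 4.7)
The plan is to follow the classical construction of Ambrosio--Gigli--Savar\'e via the minimizing movement scheme. Given $w^0\in\prb$ with $\auxil(w^0)<\infty$ and step size $\tau>0$, I would define iterates $w_\tau^n$ as in \eqref{eq:jko} but for the single-component functional $\auxil$. The $\lambda$-geodesic convexity of $\auxil$ together with lower semicontinuity yields existence of the minimizer (coercivity, when $\lambda>0$, is built in; otherwise it is enforced by the quadratic Wasserstein term once $1/\tau+\lambda>0$) and uniqueness in the same regime. The decisive ingredient is a discrete evolution variational inequality obtained by comparing $w_\tau^n$ against points on a geodesic from $w_\tau^n$ to an arbitrary competitor $\tilde w$, exploiting the $\lambda$-convexity of $\auxil$ and the standard semiconcavity of $w\mapsto\tfrac12\W_2^2(w,w_\tau^{n-1})$ along geodesics:
\[
\frac{1}{2\tau}\bigl(\W_2^2(w_\tau^n,\tilde w)-\W_2^2(w_\tau^{n-1},\tilde w)\bigr)+\frac{\lambda}{2}\W_2^2(w_\tau^n,\tilde w)+\auxil(w_\tau^n)\le\auxil(\tilde w).
\]

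Next, I would introduce the piecewise-constant interpolant $\bar w_\tau(t):=w_\tau^{\lceil t/\tau\rceil}$, extract a subsequence converging locally uniformly in $\W_2$ to a limit curve $\flow{t}{\auxil}(w^0)$ via a Wasserstein version of Ascoli--Arzel\`a (the telescoped discrete EVI with $\tilde w=w_\tau^{n-1}$ provides the necessary metric-derivative bound), and pass to the limit in the discrete EVI to obtain the continuous evolution variational inequality, identifying $\flow{}{\auxil}$ as a $\lambda$-flow. Uniqueness is automatic from the EVI, since any two EVI solutions $w_1(t),w_2(t)$ satisfy $\tfrac{\dn}{\dn t}\W_2^2(w_1,w_2)\le-2\lambda\,\W_2^2(w_1,w_2)$. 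To obtain the continuity-equation form of the evolution equation, I would use the Euler--Lagrange equation of each JKO step: by Brenier's theorem the optimal transport map from $w_\tau^n$ to $w_\tau^{n-1}$ equals $\mathrm{id}+\tau\,\dff\tfrac{\delta\auxil}{\delta w}(w_\tau^n)$, and in the limit $\tau\to0$ this produces the velocity field $-\dff\tfrac{\delta\auxil}{\delta w}$, yielding the stated divergence-form equation whenever the first variation exists in an appropriate sense.

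For part (b), existence and uniqueness of $w_{\min}$ follow from strict $\lambda$-geodesic convexity (for $\lambda>0$) together with lower semicontinuity: minimizing sequences are $\W_2$-bounded, hence narrowly precompact with controlled second moments, and strict convexity rules out two distinct minimizers. The left inequality in \eqref{eq:subdiff} is the $\lambda$-convexity inequality evaluated on the geodesic from $w_{\min}$ to $w$; using $\auxil(w_s)\ge\auxil(w_{\min})$, dividing by $s$, and sending $s\to0^+$ immediately gives $\auxil(w)-\auxil(w_{\min})\ge\tfrac{\lambda}{2}\W_2^2(w,w_{\min})$. The right inequality combines the energy-dissipation identity of the EVI flow, $\tfrac{\dn}{\dn t}\auxil(\flow{t}{\auxil}(w))=-|\partial\auxil(\flow{t}{\auxil}(w))|^2$, with the slope bound $|\partial\auxil(w)|^2\ge 2\lambda\bigl(\auxil(w)-\auxil(w_{\min})\bigr)$, which itself follows from $\lambda$-convexity by differentiating the convexity inequality at the basepoint $w$ in the direction of $w_{\min}$.

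The main obstacle in a fully rigorous execution is the passage from the discrete scheme to the continuous EVI: both the quadratic expansion of $\W_2^2$ along geodesics and the identification of the limiting velocity field rely on the full Wasserstein differential calculus of Ambrosio--Gigli--Savar\'e. A second delicate point is closing the metric slope identity $|\partial\auxil|^2=-\tfrac{\dn^+}{\dn t}\bigl|_{t=0^+}\auxil(\flow{t}{\auxil}(w))$, which requires both the upper bound from the EVI and a chain-rule lower bound along absolutely continuous curves --- precisely why this is a cornerstone of the AGS theory rather than elementary.
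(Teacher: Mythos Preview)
The paper does not prove this theorem at all: it is stated as a background result with the citation \cite{savare2008} and used as a black box throughout. There is therefore no ``paper's own proof'' to compare against.

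Your sketch is a faithful outline of the Ambrosio--Gigli--Savar\'e construction that the citation points to, and the main steps (discrete EVI from $\lambda$-convexity plus semiconcavity of the squared distance, compactness of the interpolants, passage to the continuous EVI, uniqueness by Gronwall, identification of the velocity via the JKO Euler--Lagrange equation, and the slope/entropy--entropy-production argument for \eqref{eq:subdiff}) are correctly identified. Two minor caveats worth flagging: first, the clean discrete EVI you wrote down is not quite what one obtains directly; in AGS the rigorous version uses \emph{generalized} geodesics (convexity along interpolations with a fixed third base point) rather than ordinary geodesics, precisely because semiconcavity of $\W_2^2(\cdot,w_\tau^{n-1})$ is only guaranteed along generalized geodesics based at $w_\tau^{n-1}$. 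Second, part~(b) as stated tacitly assumes $\lambda>0$, which you correctly note; for $\lambda\le 0$ neither existence of a minimizer nor either inequality in \eqref{eq:subdiff} need hold.
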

One of the cornerstones of our analysis below is the following theorem:
\begin{theorem}[Flow interchange lemma {\cite[Thm. 3.2]{matthes2009}}]\label{thm:flowinterchange}
Let $\mathcal{B}$ be a proper, lower semicontinuous and $\lambda$-geodesically convex functional on $(X,d)$. Let furthermore $\mathcal{A}$ be another proper, lower semicontinuous functional on $(X,d)$ such that $\operatorname{Dom}(\mathcal{A})\subset \operatorname{Dom}(\mathcal{B})$. Assume that, for arbitrary $\tau>0$ and $\tilde w\in X$, the functional $\frac1{2\tau}d(\cdot,\tilde w)^2+\mathcal{A}$ possesses a minimizer $w$ on $X$. Then, the following holds:
\begin{align*}
\mathcal{B}(w)+\tau \mathrm{D}^\mathcal{B}\mathcal{A}(w)+\frac{\lambda}{2}d^2(w,\tilde w)&\le \mathcal{B}(\tilde w).
\end{align*}
There, $\mathrm{D}^\mathcal{B}\mathcal{A}(w)$ denotes the \emph{dissipation} of the functional $\mathcal{A}$ along the gradient flow $\flow{}{\mathcal{B}}$ of the functional $\mathcal{B}$, i.e.
\begin{align*}
\mathrm{D}^\mathcal{B}\mathcal{A}(w):=\limsup_{h\searrow 0}\frac{\mathcal{A}(w)-\mathcal{A}(\flow{h}{\mathcal{B}}(w)    )   }{h}.
\end{align*}
\end{theorem}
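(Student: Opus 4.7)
The strategy is to use the minimality of $w$ in the penalized functional $\frac1{2\tau}d(\cdot,\tilde w)^2+\mathcal{A}$ by inserting the curve $h\mapsto\flow{h}{\mathcal{B}}(w)$ as a competitor, divide by $h>0$, pass to the limit $h\searrow 0$, and then close the argument with the evolution variational estimate from Definition~2.2 for the flow $\flow{}{\mathcal{B}}$ at time $t=0$ tested against $\tilde w$. The two ingredients fit together because the JKO-type minimality of $w$ controls an infinitesimal variation of the squared distance $d^2(\cdot,\tilde w)$ from one side, while the evolution variational estimate controls exactly the same variation from the opposite side.

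Concretely, the inclusion $\operatorname{Dom}(\mathcal{A})\subset\operatorname{Dom}(\mathcal{B})$ ensures that $w$ lies in the domain of $\flow{}{\mathcal{B}}$ (whose existence is supplied by Theorem~2.3), so $\flow{h}{\mathcal{B}}(w)\in X$ is an admissible competitor. The minimality inequality for $w$ then reads
\begin{align*}
\frac1{2\tau}d(w,\tilde w)^2+\mathcal{A}(w)\le \frac1{2\tau}d(\flow{h}{\mathcal{B}}(w),\tilde w)^2+\mathcal{A}(\flow{h}{\mathcal{B}}(w)),
\end{align*}
which after rearranging and dividing by $h>0$ becomes
\begin{align*}
\frac{\mathcal{A}(w)-\mathcal{A}(\flow{h}{\mathcal{B}}(w))}{h}\le \frac1{2\tau}\cdot\frac{d(\flow{h}{\mathcal{B}}(w),\tilde w)^2-d(w,\tilde w)^2}{h}.
\end{align*}
Taking $\limsup_{h\searrow 0}$ on both sides produces $\mathrm{D}^{\mathcal{B}}\mathcal{A}(w)$ on the left-hand side and $\frac1{2\tau}\frac{\dn^+}{\dn t}d^2(\flow{t}{\mathcal{B}}(w),\tilde w)|_{t=0}$ on the right. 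Evaluating the evolution variational estimate of Definition~2.2 at $t=0$,
\begin{align*}
\frac12\frac{\dn^+}{\dn t}d^2(\flow{t}{\mathcal{B}}(w),\tilde w)\Big|_{t=0}+\frac{\lambda}{2}d^2(w,\tilde w)+\mathcal{B}(w)\le \mathcal{B}(\tilde w),
\end{align*}
substituting the resulting upper bound for the right-hand Dini derivative, and multiplying through by $\tau$, one recovers exactly $\mathcal{B}(w)+\tau\mathrm{D}^{\mathcal{B}}\mathcal{A}(w)+\frac{\lambda}{2}d^2(w,\tilde w)\le \mathcal{B}(\tilde w)$.

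The main point to watch is the $\limsup$ step: since the minimality inequality is a genuine pointwise inequality valid for every $h>0$, passing to $\limsup_{h\searrow 0}$ preserves it without any additional regularity being required on $h\mapsto \mathcal{A}(\flow{h}{\mathcal{B}}(w))$, and the resulting right-upper Dini derivative of $d^2(\flow{t}{\mathcal{B}}(w),\tilde w)$ at $t=0$ is precisely the object controlled by the evolution variational estimate; the $\lambda$-geodesic convexity of $\mathcal{B}$ enters only through the existence of $\flow{}{\mathcal{B}}$ and its evolution variational estimate at rate $\kappa=\lambda$ supplied by Theorem~2.3. The only bookkeeping subtlety is the case in which $\mathcal{A}(\flow{h}{\mathcal{B}}(w))=+\infty$ for some small $h$: in that case the minimality inequality is trivial at such $h$ (it amounts to ``finite $\le +\infty$'') and the claim follows with the convention that the corresponding difference quotient equals $-\infty$ and therefore does not contribute to the $\limsup$, so no serious obstacle beyond this convention-checking is expected.
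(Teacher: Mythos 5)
Your argument is correct and is essentially the proof of the cited result \cite[Thm.~3.2]{matthes2009}; the paper itself states this lemma without proof, so there is nothing internal to compare against. The two-step combination --- testing the minimality of $w$ in $\frac1{2\tau}d(\cdot,\tilde w)^2+\mathcal{A}$ against the competitor $\flow{h}{\mathcal{B}}(w)$, then bounding the resulting upper right Dini derivative of $t\mapsto d^2(\flow{t}{\mathcal{B}}(w),\tilde w)$ at $t=0$ by the evolution variational estimate --- is exactly the standard route, and your convention for the degenerate case $\mathcal{A}(\flow{h}{\mathcal{B}}(w))=+\infty$ is the right one.
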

%%%

\subsection{Minimizing movement and existence of solutions}
In this subsection, we recall the results proved by Kinderlehrer \emph{et al.} in \cite{kmx2015} in our specific setting. 
\begin{proposition}[Minimizing movement {\cite[Prop.~3.3]{kmx2015}}]\label{prop:minmov}
Let $\tau>0$ and $(u^0,v^0)\in\X\cap (L^2\times L^2)$ be given. Then, the sequence $(u_\tau^n,v_\tau^n)_{n\in\N}$ defined by the minimizing movement scheme \eqref{eq:jko} is well-defined with $(u_\tau^n,v_\tau^n)\in\X\cap (W^{1,2}\times W^{1,2})$ for all $n\in\N$. By definition, the sequence $(\ent(u_\tau^n,v_\tau^n))_{n\in\N}$ is nonincreasing.
\end{proposition}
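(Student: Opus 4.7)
The plan is to argue by induction on $n$, with the base case $n=0$ trivial. Given $(u_\tau^{n-1},v_\tau^{n-1})\in\X\cap(L^2\times L^2)$, define
\begin{align*}
\mathcal{F}(u,v):=\frac{1}{2\tau}\metr((u,v),(u_\tau^{n-1},v_\tau^{n-1}))^2+\ent(u,v),
\end{align*}
and apply the direct method of the calculus of variations to produce $(u_\tau^n,v_\tau^n)$, then extract the asserted regularity from the Euler--Lagrange equation, and finally read off monotonicity from the minimality.

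For existence of the minimizer, I would first observe that $\mathcal{F}$ is bounded below: since $U,V\ge 0$, $\psi=\krnl\ast(u-v)$ has nonnegative Dirichlet energy, and the $L^2$-terms $\int u^2\,\dn x,\int v^2\,\dn x$ are nonnegative, one has $\ent\ge 0$, hence $\mathcal{F}\ge 0$. Let $(u^k,v^k)_k$ be a minimizing sequence. The distance term controls the second moments $\mom(u^k),\mom(v^k)$, and the $L^2$-parts of $\ent$ control $\|u^k\|_{L^2}$ and $\|v^k\|_{L^2}$. A standard narrow/weak compactness argument yields, up to subsequences, $u^k\rightharpoonup u_\tau^n$ and $v^k\rightharpoonup v_\tau^n$ weakly in $L^2$ together with narrow convergence of the measures, and in particular convergence in $(\prb,\W_2)$. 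Lower semicontinuity of $\mathcal{F}$ then follows piece by piece: $\W_2$ is lower semicontinuous under narrow convergence; $\int u^2+v^2$ is weakly lower semicontinuous in $L^2$; the confinement terms $\int uU+vV$ are lower semicontinuous by Fatou (since $U,V\ge 0$ are continuous); and the Dirichlet term $\tfrac{\eps}{2}\int|\dff\psi|^2\,\dn x=\tfrac{\eps}{2}\langle u-v,\krnl\ast(u-v)\rangle$ is continuous along the subsequence by virtue of Hardy--Littlewood--Sobolev applied in $\R^3$, using that $u^k-v^k$ is bounded in $L^{6/5}$ via interpolation between $L^1$ and $L^2$. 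This yields $\mathcal{F}(u_\tau^n,v_\tau^n)\le\liminf_k\mathcal{F}(u^k,v^k)$, so the infimum is attained.

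The Sobolev regularity comes from the first variation. Perturbing $\mathcal{F}$ along smooth vector fields and using the standard Brenier/McCann theory for the quadratic Wasserstein term, one obtains
\begin{align*}
2u_\tau^n+U+\eps\psi_\tau^n+\tfrac{1}{\tau}\varphi_u=C_u\quad\text{on }\{u_\tau^n>0\},
\end{align*}
and analogously for $v_\tau^n$ with a Kantorovich potential $\varphi_v$ and constant $C_v$. Since $U\in C^2$, $\psi_\tau^n\in W^{1,2}_{\mathrm{loc}}$ with $\|\dff\psi_\tau^n\|_{L^2}^2$ bounded by $\ent$, and the Kantorovich potentials are in $W^{1,2}(u_\tau^n\,\dn x)$ by the Brenier--McCann construction, rearranging gives $\dff u_\tau^n\in L^2$ and analogously $\dff v_\tau^n\in L^2$, hence $(u_\tau^n,v_\tau^n)\in W^{1,2}\times W^{1,2}$. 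Finally, using $(u_\tau^{n-1},v_\tau^{n-1})$ itself as a competitor in the definition of $(u_\tau^n,v_\tau^n)$ gives
\begin{align*}
\ent(u_\tau^n,v_\tau^n)\le\mathcal{F}(u_\tau^n,v_\tau^n)\le\mathcal{F}(u_\tau^{n-1},v_\tau^{n-1})=\ent(u_\tau^{n-1},v_\tau^{n-1}),
\end{align*}
which proves monotonicity. The main obstacle is ensuring continuity (not just lower semicontinuity) of the Poisson-coupling term $\int|\dff\psi|^2\,\dn x$ under weak convergence, since this term is not manifestly convex in $(u,v)$; everything else is a routine application of the JKO machinery together with the geodesic-convexity criteria recalled in Theorem \ref{thm:crit_conv}.
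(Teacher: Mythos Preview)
The paper does not supply its own proof of this proposition: it is quoted verbatim as \cite[Prop.~3.3]{kmx2015} and used as a black box. So there is no ``paper's proof'' to compare against; your sketch is essentially the standard JKO argument that \cite{kmx2015} carries out in detail.

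That said, your write-up contains one genuine misconception worth flagging. You identify as the ``main obstacle'' the lower semicontinuity of the Dirichlet term $\tfrac{\eps}{2}\int|\dff\psi|^2\dd x$, claiming it is ``not manifestly convex in $(u,v)$'' and therefore needs an ad hoc continuity argument via Hardy--Littlewood--Sobolev. This is wrong on both counts. First, the functional
\[
(u,v)\longmapsto \frac{\eps}{2}\int_{\Rdrei}(u-v)\,\krnl\ast(u-v)\dd x
\]
\emph{is} convex in $(u,v)$: the map $(u,v)\mapsto u-v$ is linear, and $w\mapsto\langle w,\krnl\ast w\rangle$ is a nonnegative quadratic form (its Fourier symbol is $|\xi|^{-2}\ge 0$). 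Convexity plus strong continuity on $L^{6/5}$ immediately gives weak lower semicontinuity; this is exactly what the present paper invokes (see the existence part of the proof of Theorem~\ref{thm:stat}, which cites \cite[Prop.~6.1]{kmx2015}). Second, your proposed route---boundedness in $L^{6/5}$ plus HLS---only gives a uniform bound, not continuity along a weakly convergent sequence; you would still need to upgrade weak to strong $L^{6/5}$ convergence, which does not follow from the ingredients you list. So your argument as written has a gap precisely at the point you flag as delicate, but the gap is closed by the convexity observation you dismissed.

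The remainder of your outline (direct method, Euler--Lagrange/Brenier for $W^{1,2}$ regularity, competitor argument for monotonicity) is correct in spirit and matches the strategy in \cite{kmx2015}, though the regularity step is more delicate in detail than your one-line summary suggests.
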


Define for $\tau>0$ the \emph{discrete solution} $(u_\tau,v_\tau):[0,\infty)\to \X$ by piecewise constant interpolation, that is
\begin{align}\label{eq:disc_sol}
\begin{split}
(u_\tau,v_\tau)(0)&:=(u^0,v^0),\\
(u_\tau,v_\tau)(t)&:=(u_\tau^n,v_\tau^n)\text{ for }t\in ((n-1)\tau,n\tau]\text{ and }n\ge 1.
\end{split}
\end{align}

The following main result of \cite{kmx2015} about the existence of solutions to \eqref{eq:pnp_u}--\eqref{eq:poi} is at the basis of our subsequent analysis:
\begin{theorem}[Existence of solutions {\cite[Thm.~2]{kmx2015}}]\label{thm:exist}
Let $\eps>0$ and $U,V$ as mentioned above be given. Define, for initial conditions $(u^0,v^0)\in\X\cap(L^2\times L^2)$ and each $\tau>0$ a discrete solution $(u_\tau,v_\tau)$ by \eqref{eq:jko}\&\eqref{eq:disc_sol}. Then, there exists a sequence $\tau_k\searrow 0$ and a map $(u,v):[0,\infty)\times\R^3\to[0,\infty]^2$ such that for each $t>0$, $u_{\tau_k}(t)\rightharpoonup u(t)$ and $v_{\tau_k}(t)\rightharpoonup v(t)$, both narrowly in $\prb$ as $k\to\infty$. Moreover, $(u,v)$ is a solution to \eqref{eq:pnp_u}--\eqref{eq:poi} in the sense of distributions, it attains the initial condition and one has for each $T>0$:
\begin{align*}
u,v&\in C^{1/2}([0,T];(\prb,\W_2))\cap L^\infty([0,T];L^2)\cap L^2([0,T];W^{1,2}),\\
\ent(u(T),v(T))&\le \ent(u^0,v^0).
\end{align*}
\end{theorem}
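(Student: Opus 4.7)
The plan is to follow the standard program for Wasserstein gradient flows applied to the scheme \eqref{eq:jko}. Starting from Proposition \ref{prop:minmov}, the minimality inequality
\[
\frac{1}{2\tau}\metr\bigl((u_\tau^n,v_\tau^n),(u_\tau^{n-1},v_\tau^{n-1})\bigr)^2+\ent(u_\tau^n,v_\tau^n)\le\ent(u_\tau^{n-1},v_\tau^{n-1})
\]
telescoped over $n$ gives $\sum_{n\ge 1}\tfrac{1}{2\tau}\metr^2\bigl((u_\tau^n,v_\tau^n),(u_\tau^{n-1},v_\tau^{n-1})\bigr)\le \ent(u^0,v^0)-\inf_\X\ent<\infty$. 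This immediately yields the uniform $C^{1/2}$-Hölder estimate of the interpolants \eqref{eq:disc_sol} in $(\X,\metr)$. Combined with the quadratic coercivity of $U,V$ it controls the second moments $\mom(u_\tau^n),\mom(v_\tau^n)$ uniformly for $n\le T/\tau$, while monotonicity of $\ent$ together with $U,V\ge 0$ delivers the $L^\infty([0,T];L^2\times L^2)$-bound.

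To obtain the spatial Sobolev regularity $L^2([0,T];W^{1,2}\times W^{1,2})$, I would apply the flow interchange lemma (Theorem \ref{thm:flowinterchange}) with the auxiliary $\mathcal{B}(u,v):=\int u\log u\dd x+\int v\log v\dd x$, the Boltzmann entropy of both species, which is $0$-geodesically convex by Theorem \ref{thm:crit_conv}(a) and whose $\W_2$-gradient flow is the decoupled heat flow. A direct calculation of the dissipation of $\ent=\mathcal{A}$ along this flow produces the sought-after Dirichlet terms $2\int(|\dff u|^2+|\dff v|^2)\dd x$ together with cross-terms involving $\Delta U$, $\Delta V$ and $\eps\dff\psi$; these can be absorbed into the leading part by Cauchy-Schwarz and Young's inequality, yielding the summability $\sum_n\tau\int(|\dff u_\tau^n|^2+|\dff v_\tau^n|^2)\dd x\le C(T)$.

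Next I would derive the discrete Euler-Lagrange equation by outer variations, testing the minimizer against push-forwards $(\mathrm{id}+s\xi)_\#u_\tau^n$ along $\xi\in C_c^\infty(\Rdrei;\Rdrei)$, differentiating at $s=0$ via the standard formula for the first variation of $\W_2^2$ in terms of the optimal transport plan, and computing the first variation of $\ent$ (the Poisson term $\tfrac{\eps}{2}\int|\dff\psi|^2\dd x$ being symmetric in $u,v$ via $\psi=\krnl\ast(u-v)$ produces the coupling $\pm\eps\psi$ with the correct signs). With the a priori estimates in hand, an Aubin-Lions-type argument adapted to Wasserstein settings yields strong $L^2_{\mathrm{loc}}$-compactness of $u_{\tau_k},v_{\tau_k}$ along some subsequence $\tau_k\to 0$, while narrow convergence at each $t$ follows from Ascoli-Arzelà in $(\prb,\W_2)$. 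The Poisson potential $\psi_{\tau_k}$ is controlled via Hardy-Littlewood-Sobolev and elliptic regularity, upgrading convergence of $u_{\tau_k}-v_{\tau_k}$ to strong $L^2_{\mathrm{loc}}$-convergence of $\dff\psi_{\tau_k}$. Passage to the limit in the discrete weak form then produces a distributional solution; lower semicontinuity of $\ent$ yields the energy inequality, and the $C^{1/2}$-estimate guarantees attainment of the initial datum.

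The main obstacle is the passage to the limit in the non-local Poisson coupling $u_\tau\dff\psi_\tau$: as a bilinear term depending non-locally on both species it is not weakly continuous in general. The crucial observation is that convolution with Newton's kernel $\krnl$ in three dimensions is strongly smoothing, so that narrow convergence of $u_{\tau_k}-v_{\tau_k}$ combined with the a priori $W^{1,2}$-bound lifts to strong $L^2_{\mathrm{loc}}$-convergence of $\dff\psi_{\tau_k}$; combined with the Aubin-Lions strong compactness of $u_{\tau_k}$ itself, this allows identification of the limiting product and closes the argument.
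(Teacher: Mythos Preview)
The paper does not prove this theorem at all: Theorem~\ref{thm:exist} is explicitly quoted from \cite[Thm.~2]{kmx2015} and is only \emph{recalled} in Section~\ref{sec:pre} as background (``In this subsection, we recall the results proved by Kinderlehrer \emph{et al.} in \cite{kmx2015} in our specific setting.''). There is therefore no proof in the present paper to compare your proposal against.

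That said, your outline is a faithful sketch of the standard Wasserstein gradient-flow program and is essentially how the proof in \cite{kmx2015} proceeds: telescoping the minimality inequality for the basic a~priori estimates, flow interchange with the Boltzmann entropy for the $L^2_tW^{1,2}_x$ bound, push-forward variations for the discrete Euler--Lagrange equations, Ascoli--Arzel\`a in $(\prb,\W_2)$ together with an Aubin--Lions-type argument for compactness, and Hardy--Littlewood--Sobolev to control the Poisson coupling in the limit. Your identification of the bilinear term $u_\tau\dff\psi_\tau$ as the delicate point is correct, and the resolution you propose (smoothing by $\krnl$ plus strong compactness of one factor) is the right one. If anything, your sketch glosses over the fact that the flow-interchange step with $\calH$ requires a careful regularization (since the heat flow starting from an $L^2$ datum need not be instantaneously smooth enough to justify the integrations by parts), but this is a technical point handled in \cite{kmx2015} and not something the present paper addresses.
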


%%%%%%%%%%%%%%%%%%%
%
\section{The equilibrium state}\label{sec:stat}
In this section, we prove Theorem \ref{thm:stat}.
\begin{proof}
\underline{Existence:} 
Trivially, $\ent$ is bounded from below. Hence, there exists a minimizing sequence $(u_k,v_k)_{k\in\N}$ in $\X\cap(L^2\times L^2)$ with $\lim\limits_{k\to\infty}\ent(u_k,v_k)=\inf\limits_{(u,v)\in\X}\ent(u,v)$. Thus, we have for some $C>0$ that $\|u_k\|_{L^2}\le C$, $\|v_k\|_{L^2}\le C$ for all $k\in\N$. Moreover, using the $\lambda_0$-convexity of $U$ and $V$ on $\R^3$, one obtains $\sup\limits_{k\in\N}\mom(u_k)<\infty$ and $\sup\limits_{k\in\N}\mom(v_k)<\infty$ with the help of the elementary estimates $U(x)-U(x_{\min}^U)\ge \frac{\lambda_0}{4}|x|^2-\frac{\lambda_0}{2}|x_{\min}^U|^2$ and $V(x)-V(x_{\min}^V)\ge \frac{\lambda_0}{4}|x|^2-\frac{\lambda_0}{2}|x_{\min}^V|^2$ (with the unique minimizers $x_{\min}^U, x_{\min}^V$ of $U$ and $V$ on $\R^3$, respectively). We infer with the Prokhorov and Banach-Alaoglu theorems that there exists a subsequence (non-relabelled) and a limit $(u_\infty,v_\infty)\in\X\cap(L^2\times L^2)$ such that $u_k\rightharpoonup u_\infty$ and $v_k\rightharpoonup v_\infty$ both narrowly as probability measures and weakly in $L^2$, as $k\to\infty$. With respect to these convergences, $\ent$ is lower semicontinuous. In fact, this is obvious for the quadratic and linear terms in $\ent$ since $U$ and $V$ grow quadratically. For the last term containing the Dirichlet energy $\frac12\|\dff\psi\|_{L^2}^2$, we refer to \cite[Prop.~6.1]{kmx2015} for a result on lower semicontinuity w.r.t. weak $L^1$ convergence. Hence, it follows that $(u_\infty,v_\infty)$ is indeed a minimizer of $\ent$ on $\X$ and hence also a steady state of \eqref{eq:pnp_u}--\eqref{eq:poi}.\\
\underline{Uniqueness:} 
We claim that $\ent$ is uniformly convex with respect to the flat distance induced by the product norm $\|\cdot\|_{L^2\times L^2}$, which implies the uniqueness of minimizers. Indeed, for all $(u,v),(u',v')\in \X\cap(L^2\times L^2)$ and all $s\in [0,1]$, we have, thanks to 
\begin{align}
\label{eq:psipi}
\int_\Rdrei |\dff(\krnl\ast w)|^2\dd x&=\int_\Rdrei (\krnl\ast w)w\dd x=\int_\Rdrei\int_\Rdrei w(x)\krnl(x-y)w(y)\dd x\dd y,
\end{align}
which holds for all $w\in \X\cap L^2$, that
\begin{align*}
&\frac{\dd^2}{\dd s^2}\bigg.\bigg|_{s=0}\ent(u+s(u'-u),v+s(v'-v))\\&=\int_\Rdrei \left[2(u'-u)^2+2(v'-v)^2+\eps((u'-u)-(v'-v))\krnl\ast((u'-u)-(v'-v))\right]\dd x\\&\ge 2\|(u'-u,v'-v)\|_{L^2\times L^2}^2,
\end{align*}
so $\ent$ is $2$-convex w.r.t. the distance induced by $\|\cdot\|_{L^2\times L^2}$.\\
\underline{Euler-Lagrange equations:} 
Since $(u_\infty,v_\infty)$ is the minimizer of $\ent$, the following variational inequality holds:
\begin{align}\label{eq:varineq}
\begin{split}
0&\le \frac{\dd^+}{\dd s}\bigg.\bigg|_{s=0}\ent(u_\infty+s\tilde u,v_\infty+s\tilde v)\\
&=\int_\Rdrei\left[2u_\infty+U+\eps \krnl\ast(u_\infty-v_\infty)\right]\tilde u\dd x\\&\quad+\int_\Rdrei\left[2v_\infty-V-\eps \krnl\ast(u_\infty-v_\infty)\right]\tilde v\dd x,
\end{split}
\end{align}
for all $\tilde u,\tilde v$ such that both $u_\infty+\tilde u\ge 0$ and $v_\infty+\tilde v\ge 0$ on $\Rdrei$, and $\int_\Rdrei \tilde u\dd x=0=\int_\Rdrei \tilde v\dd x$. In order to prove \eqref{eq:uinf}, we set $\tilde v:=0$. Let $\phi:\R^3\to\R$ be such that $\int_\Rdrei \phi\dd x\le 1$ and $\phi+u_\infty\ge 0$ on $\Rdrei$. The choice
\begin{align*}
\tilde u_\phi:=\frac12 \phi-\frac12 u_\infty\int_\Rdrei\phi\dd x
\end{align*}
is admissible for $\tilde u$ in \eqref{eq:varineq}, hence (recall our notation $\psi_\infty:=\krnl\ast(u_\infty-v_\infty)$)
\begin{align}
\label{eq:varu}
0&\le \int_\Rdrei (2u_\infty+U+\eps\psi_\infty-C_u)\phi\dd x,
\end{align}
with
\begin{align*}
C_u&:=\int_\Rdrei (2u_\infty^2+Uu_\infty+\eps u_\infty\psi_\infty)\dd x\in\R.
\end{align*}
If $u_\infty(x)>0$ for some $x\in\R^3$, we are able to choose $\phi$ supported on a small neighborhood of $x$ and to replace by $-\phi$ in \eqref{eq:varu} and obtain
\begin{align*}
2u_\infty(x)+U(x)+\eps\psi_\infty(x)&=C_u.
\end{align*}
If $u_\infty(x)=0$ for some $x$, one has $U(x)-\eps\psi_\infty(x)-C_u\ge 0$, and hence \eqref{eq:uinf} is true in both cases. The equation for $v_\infty$ \eqref{eq:vinf} can be derived in analogy.\\
\underline{Properties:} 
First, since $(u_\infty,v_\infty)$ are admissible as starting condition $(u_\tau^0,v_\tau^0)$ (for arbitrary $\tau>0$) in scheme \eqref{eq:jko}, we obtain thanks to the minimizing property and Proposition \ref{prop:minmov} that $(u_\infty,v_\infty)\in W^{1,2}\times W^{1,2}$. We now show that $\psi_\infty\in L^\infty$. To this end, let $x\in\R^3$ and observe at first that
\begin{align*}
\int_{\ball_1(x)}\frac{|u_\infty(y)-v_\infty(y)|}{|x-y|}\dd y&\le \|u_\infty-v_\infty\|_{L^2}\left(\int_{\ball_1(x)}\frac1{|x-y|^2}\dd y\right)^{1/2}\\&=2\sqrt{\pi}\|u_\infty-v_\infty\|_{L^2},
\end{align*}
independent of $x$, by H\"older's inequality and the transformation theorem. Furthermore, since $|x-y|\ge 1$ if $y\notin\ball_1(x)$ and $\|u_\infty\|_{L^1}=1=\|v_\infty\|_{L^1}$, we get
\begin{align*}
\int_{\Rdrei\setminus\ball_1(x)}\frac{|u_\infty(y)-v_\infty(y)|}{|x-y|}\dd y&\le \|u_\infty-v_\infty\|_{L^1} \sup_{y\notin\ball_1(x)}|x-y|^{-1}\le 2.
\end{align*}
Putting both parts together, we see that $\sup\limits_{x\in\Rdrei}|\psi_\infty(x)|<\infty$. In view of \eqref{eq:uinf}\&\eqref{eq:vinf}, $\psi_\infty\in L^\infty$ implies that $u_\infty$ and $v_\infty$ have compact support since $U$ and $V$ grow quadratically as $|x|\to\infty$. By classical results on solutions to Poisson's equation \cite[Thm.~10.2]{lieb2001}, we then infer that $\psi_\infty\in C^{0,\alpha}$ for all $\alpha\in (0,1)$, since by the Gagliardo-Nirenberg-Sobolev inequality, one has $(u_\infty,v_\infty)\in L^6\times L^6$. Hence, using \eqref{eq:uinf}\&\eqref{eq:vinf} again, we conclude that $u_\infty$ and $v_\infty$ also are H\"older continuous. By elliptic regularity for Poisson's kernel \cite[Thm.~10.3]{lieb2001}, it follows that $\psi_\infty\in C^{2,\alpha}$.
\end{proof}

%%%%%%%%%%%%%%%%%%
%
\section{Auxiliary entropy and dissipation}\label{sec:auxent}
In this section, we define a suitable geodesically convex auxiliary entropy $\lyp$ and derive the dissipation of the driving entropy $\ent$ along the gradient flow $\flow{}{\lyp}$ of $\lyp$. \\

Let $\lyp:\X\to\R\cup\{\infty\}$ be defined via
\begin{align*}
\lyp(u,v):=
\begin{cases}
\int_{\Rdrei}\left[\right.u^2-u_\infty^2+v^2-v_\infty^2+(u-u_\infty)U+(v-v_\infty)V\\\quad+\eps(u-u_\infty)\psi_\infty-\eps(v-v_\infty)\psi_\infty\left.\right]\dd x,\\\qquad\qquad\text{if }(u,v)\in L^2\times L^2,\\
+\infty,\qquad\qquad\text{otherwise}.
\end{cases}
\end{align*}

Obviously, $\lyp$ is proper and lower semicontinuous on $(\X,\metr)$. 

\begin{proposition}[Properties of $\lyp$]\label{prop:lyp}
There exists $\eps_0>0$ such that for all $\eps\in (0,\eps_0)$, the following statements hold:
\begin{enumerate}[(a)]
\item There exists $L>0$ such that $\lyp$ is $\lambda_\eps$-geodesically convex w.r.t. $\metr$, where $\lambda_\eps:=\lambda_0-L\eps>0$.
\item The following holds for all $(u,v)\in \X\cap(W^{1,2}\times W^{1,2})$:
\begin{align}
\label{eq:lypest}
\begin{split}
&\quad\|u-u_\infty\|_{L^2}^2+\|v-v_\infty\|_{L^2}^2\\&\le \lyp(u,v)\\&\le \frac1{2\lambda_\eps}\int_\Rdrei\left[u|\dff(2u+U+\eps\psi_\infty)|^2+v|\dff(2v+V-\eps\psi_\infty)|^2\right]\dd x.
\end{split}
\end{align}
\item There exists a constant $K>0$ independent of $\eps$ such that for all $(u,v)\in\X$:
\begin{align}
\label{eq:LE}
\lyp(u,v)&\le \ent(u,v)-\ent(u_\infty,v_\infty)+K\eps.
\end{align}
\end{enumerate}
\end{proposition}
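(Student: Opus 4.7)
The plan is to exploit that, up to an additive constant, $\lyp$ decouples as a sum of two scalar functionals on $\prb$, namely $\mathcal{F}_u(u)=\int(u^2+u(U+\eps\psi_\infty))\,\dd x$ and $\mathcal{F}_v(v)=\int(v^2+v(V-\eps\psi_\infty))\,\dd x$, in which $\psi_\infty$ is a \emph{fixed} function (it depends on the steady state but not on the arguments $u,v$). Each component is then of ``quadratic diffusion plus linear drift'' type, for which Theorems \ref{thm:crit_conv} and \ref{thm:gfcoll} apply directly.

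For part (a), the first step is to check that $\int u^2\,\dd x$ is $0$-geodesically convex via Theorem \ref{thm:crit_conv}(a): for $h(r)=r^2$ and $d=3$, the map $r\mapsto r^3 h(r^{-3})=r^{-3}$ is convex and nonincreasing on $(0,\infty)$. Second, Theorem \ref{thm:crit_conv}(b) reduces convexity of the drift term to pointwise convexity of the effective potential $U+\eps\psi_\infty$. Since $\dff^2 U\ge\lambda_0\eins$ and $\psi_\infty\in C^{2,\alpha}$ by Theorem \ref{thm:stat}, one obtains $\lambda_\eps=\lambda_0-L\eps$ convexity, where $L$ controls $\|\dff^2\psi_\infty\|_{L^\infty}$. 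The subtle point (and the main obstacle of the proposition) is that $\psi_\infty$ itself depends on $\eps$, so $L$ must be taken uniform for $\eps$ in a neighborhood of zero. This will require tracing through the proof of Theorem \ref{thm:stat}: the $L^\infty$-bound on $\psi_\infty$ derived there is $\eps$-independent, and combined with \eqref{eq:uinf}--\eqref{eq:vinf} it yields $\eps$-uniform compact supports and Hölder norms of $u_\infty,v_\infty$, from which Schauder estimates on Poisson's equation deliver the uniform Hessian bound. Choosing $\eps_0>0$ so that $\lambda_\eps>0$ then finishes (a).

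For part (b), the lower bound rests on the identity $u^2-u_\infty^2=(u-u_\infty)^2+2u_\infty(u-u_\infty)$, which rewrites the integrand of $\lyp$ as $(u-u_\infty)^2+(u-u_\infty)(2u_\infty+U+\eps\psi_\infty)$ plus the analogous $v$-terms. The Euler--Lagrange relations \eqref{eq:uinf}--\eqref{eq:vinf} say that $2u_\infty+U+\eps\psi_\infty\equiv C_u$ on $\{u_\infty>0\}$ and $\ge C_u$ on $\{u_\infty=0\}$; combined with the mass identity $\int(u-u_\infty)\,\dd x=0$ and $u\ge 0$, this forces the linear remainder to be nonnegative, yielding $\lyp(u,v)\ge\|u-u_\infty\|_{L^2}^2+\|v-v_\infty\|_{L^2}^2$. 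As a by-product, $(u_\infty,v_\infty)$ is identified as the unique minimizer of $\lyp$ with $\min\lyp=0$. The upper bound then follows at once from the subdifferential inequality \eqref{eq:subdiff} applied to $\lyp$: by part (a) and Theorem \ref{thm:gfcoll}(a), $\flow{}{\lyp}$ decouples into the porous-medium-type flows $\partial_t u=\dv(u\dff(2u+U+\eps\psi_\infty))$ and $\partial_t v=\dv(v\dff(2v+V-\eps\psi_\infty))$, whose dissipation of $\lyp$ is precisely the right-hand side of \eqref{eq:lypest}.

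Part (c) reduces to a direct algebraic computation. Subtracting $\lyp(u,v)$ from $\ent(u,v)-\ent(u_\infty,v_\infty)$, the quadratic and external-potential contributions cancel, leaving only electrostatic terms
\begin{equation*}
\ent(u,v)-\ent(u_\infty,v_\infty)-\lyp(u,v)=\frac{\eps}{2}\int_{\Rdrei}(|\dff\psi|^2-|\dff\psi_\infty|^2)\,\dd x-\eps\int_{\Rdrei}((u-v)-(u_\infty-v_\infty))\psi_\infty\,\dd x.
\end{equation*}
Using $-\Delta(\psi-\psi_\infty)=(u-v)-(u_\infty-v_\infty)$ together with integration by parts on the second integral, one completes the square to rewrite the right-hand side as $\frac{\eps}{2}\|\dff(\psi-\psi_\infty)\|_{L^2}^2\ge 0$. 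Hence \eqref{eq:LE} in fact holds with $K=0$, so the stated $+K\eps$ slack is comfortably satisfied.
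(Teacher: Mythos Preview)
Your approach to parts (a) and (b) coincides with the paper's: the decoupling into two scalar functionals, the appeal to Theorem~\ref{thm:crit_conv} with the key uniform bound on $\|\dff^2\psi_\infty\|_{L^\infty}$, the algebraic identity $u^2-u_\infty^2=(u-u_\infty)^2+2u_\infty(u-u_\infty)$ combined with the Euler--Lagrange structure \eqref{eq:uinf}--\eqref{eq:vinf} for the lower bound, and the subdifferential inequality \eqref{eq:subdiff} for the upper bound are all exactly what the paper does. (For the Hessian bound, the paper argues slightly differently---it splits $\Rdrei$ into a large ball containing the supports and its complement, using $C^2$ compactness inside and the explicit kernel derivative outside---but the content is the same, and both arguments are somewhat informal about tracing the $\eps$-uniformity of the constants.)

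For part (c) you take a genuinely different and in fact sharper route. The paper simply drops the term $-\tfrac12\|\dff\psi\|_{L^2}^2\le 0$, rewrites $\tfrac12\|\dff\psi_\infty\|_{L^2}^2$ via \eqref{eq:psipi}, and bounds the remaining integral crudely by $3\|\psi_\infty\|_{L^\infty}$, obtaining a genuinely positive $K$. Your completion-of-the-square argument, using the symmetry of $\krnl$ to recognize
\[
\ent(u,v)-\ent(u_\infty,v_\infty)-\lyp(u,v)=\frac{\eps}{2}\,\|\dff(\psi-\psi_\infty)\|_{L^2}^2\ge 0,
\]
shows that \eqref{eq:LE} actually holds with $K=0$. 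This is cleaner and gives strictly more information; the paper's cruder estimate suffices for its purposes (only a uniform-in-$\eps$ bound is ever used downstream), but your identity is the ``right'' one.
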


\begin{proof}
\begin{enumerate}[(a)]
\item In view of Theorem \ref{thm:crit_conv}, as $\lyp$ is decoupled in its arguments $u$ and $v$, it suffices to prove that there exists $C>0$ such that $\|\dff^2\psi_\infty\|_{L^\infty}\le C$ for all sufficiently small $\eps>0$. Let $R>0$ such that $\operatorname{supp} u_\infty\cup \operatorname{supp} v_\infty\subset \ball_R(0)$. Since $\psi_\infty\in C^2$ thanks to Theorem \ref{thm:stat}, we have $\sup\limits_{x\in \overline{\ball_{R+1}(0)}}|\partial_{x_i}\partial_{x_j}\psi_\infty(x)|<\infty$ for each pair $(i,j)\in \{1,2,3\}^2$. Consider now $x\notin  \overline{\ball_{R+1}(0)}$. One easily obtains for $z\neq 0$ that 
\begin{align*}
\partial_{z_i}\partial_{z_j}\krnl(z)&=\frac1{4\pi|z|^3}\left(\frac{3z_iz_j}{|z|^2}-\delta_{ij}\right),
\end{align*}
where $\delta_{ij}$ denotes Kronecker's delta. So, using a linear transformation,
\begin{align*}
|\partial_{x_i}\partial_{x_j}\psi_\infty(x)|&=\left|\int_{\ball_R(x)}\partial_{z_i}\partial_{z_j}\krnl(z)(u_\infty(x-z)-v_\infty(x-z))\dd z\right|\\&\le \frac13 R^3\|u_\infty-v_\infty\|_{L^\infty},
\end{align*}
since for all $z\in \ball_R(x)$, one has $|z|>1$ by definition of $x$. Hence, the desired uniform estimate on $\dff^2 \psi_\infty$ is proved.
\item The upper estimate is a straightforward consequence of $\lambda_\eps$-convexity of $\lyp$ and the structure of its Wasserstein subdifferential w.r.t. $u$ and $v$, respectively (see e.g. \cite[Lemma 10.4.1]{savare2008}), in combination with \eqref{eq:subdiff}. For the lower estimate, we observe that
\begin{align*}
&\quad\lyp(u,v)\\&=\int_\Rdrei \left[\right.(u-u_\infty)^2+(v-v_\infty)^2+(u-u_\infty)(2u_\infty+U+\eps\psi_\infty)\\&\quad+(v-v_\infty)(2v_\infty+V-\eps\psi_\infty)\left.\right]\dd x.
\end{align*}
We prove that $\int_\Rdrei(u-u_\infty)(2u_\infty+U+\eps\psi_\infty)\dd x\ge 0$. Since the last term above can be treated in the same way, the claim then follows. Using \eqref{eq:uinf}, we obtain
\begin{align*}
&\quad\int_\Rdrei(u-u_\infty)(2u_\infty+U+\eps\psi_\infty)\dd x\\&=\int_{\{C_u-U-\eps\psi_\infty>0\}}(u-u_\infty)C_u\dd x+\int_{\{C_u-U-\eps\psi_\infty\le 0\}}u(U+\eps\psi_\infty)\dd x\\
&=C_u\int_\Rdrei (u-u_\infty)\dd x+\int_{\{C_u-U-\eps\psi_\infty\le 0\}} u(U+\eps\psi_\infty-C_u)\dd x\\&\ge 0,
\end{align*}
since $u$ and $u_\infty$ have equal mass (hence the first term is equal to zero) and the integrand of the second integral is nonnegative on the domain of integration.
\item One has for all $(u,v)\in \X\cap (L^2\times L^2)$:
\begin{align*}
&\quad\frac1{\eps}(\lyp(u,v)-\ent(u,v)+\ent(u_\infty,v_\infty))\\&=\int_\Rdrei\left[(u-u_\infty)\psi_\infty-(v-v_\infty)\psi_\infty-\frac12|\dff\psi|^2+\frac12|\dff\psi_\infty|^2\right]\dd x\\
&\le \int_\Rdrei \psi_\infty(u-v-\frac12 u_\infty+\frac12 v_\infty)\dd x\le 3\|\psi_\infty\|_{L^\infty}\\&\le K,
\end{align*}
thanks to \eqref{eq:psipi} and Theorem \ref{thm:stat}.
\end{enumerate}
\end{proof}

According to Theorem \ref{thm:gfcoll}(a), the $\lambda_\eps$-contractive flow $\flow{}{\lyp}=:(\aU,\aV)$ is characterized by
\begin{align}
\begin{split}
\partial_s \aU&=\dv\left[\aU\dff(2\aU+U+\eps\psi_\infty)\right],\\
\partial_s \aV&=\dv\left[\aV\dff(2\aV+V-\eps\psi_\infty)\right].
\end{split}
\label{eq:lypflow}
\end{align}

Now, we derive the central \emph{a priori} estimate on the discrete solution:

\begin{proposition}[Dissipation of $\ent$ along $\flow{}{\lyp}$]\label{prop:dsp}
Let $\tau>0$ and let $(u_\tau^n,v_\tau^n)_{n\in\N}$ be the sequence defined via the minimizing movement scheme \eqref{eq:jko}. Then, for all $n\in\N$:
\begin{align}
\label{eq:fint}
\lyp(u_\tau^n,v_\tau^n)+\tau\dsp(u_\tau^n,v_\tau^n)&\le \lyp(u_\tau^{n-1},v_\tau^{n-1}),
\end{align}
the \emph{dissipation} being given by
\begin{align}
\label{eq:dsp}
\begin{split}
\dsp(u,v)&:=\left(1-\frac{\eps}{2}\right)\int_\Rdrei\big(u|\dff(2u+U+\eps\psi_\infty)|^2+v|\dff(2v+V-\eps\psi_\infty)|^2\big)\dd x\\&\quad-\frac{\eps}{2}\int_\Rdrei (u+v)|\dff(\psi-\psi_\infty)|^2\dd x.
\end{split}
\end{align}
\end{proposition}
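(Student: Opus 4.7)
The target estimate is a textbook application of the flow interchange technique (Theorem 2.4) with the roles $\mathcal{B}=\lyp$ and $\mathcal{A}=\ent$. Since $(u_\tau^n,v_\tau^n)$ minimizes $\frac{1}{2\tau}\metr(\cdot,(u_\tau^{n-1},v_\tau^{n-1}))^2+\ent$ by construction, and since $\lyp$ is proper, lower semicontinuous, and $\lambda_\eps$-geodesically convex on $(\X,\metr)$ by Proposition \ref{prop:lyp}(a), the lemma applies directly and yields
\begin{align*}
\lyp(u_\tau^n,v_\tau^n)+\tau\,\mathrm{D}^{\lyp}\ent(u_\tau^n,v_\tau^n)+\frac{\lambda_\eps}{2}\metr^2\big((u_\tau^n,v_\tau^n),(u_\tau^{n-1},v_\tau^{n-1})\big)\le\lyp(u_\tau^{n-1},v_\tau^{n-1}).
\end{align*}
Since $\lambda_\eps>0$, the last term on the left is nonnegative and may be discarded. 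The whole proof then reduces to the \emph{pointwise} identification of the dissipation $\mathrm{D}^{\lyp}\ent(u,v)$ and a Young-type estimate.

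For the identification, I would differentiate $\ent$ along the auxiliary flow \eqref{eq:lypflow}. Writing $\frac{\delta\ent}{\delta u}=2u+U+\eps\psi$ and $\frac{\delta\ent}{\delta v}=2v+V-\eps\psi$, multiplying the two equations in \eqref{eq:lypflow} by these first variations, integrating by parts (justified since $(u_\tau^n,v_\tau^n)\in W^{1,2}\times W^{1,2}$ by Proposition \ref{prop:minmov} and regularity of $U,V,\psi_\infty$ from Theorem \ref{thm:stat} propagates along the decoupled porous-medium-like flow; a standard regularization/truncation argument closes any integrability gaps), and evaluating at $s=0$ yields
\begin{align*}
\mathrm{D}^{\lyp}\ent(u,v)=\int_\Rdrei u\,\dff(2u+U+\eps\psi)\cdot\dff(2u+U+\eps\psi_\infty)\dd x\\
+\int_\Rdrei v\,\dff(2v+V-\eps\psi)\cdot\dff(2v+V-\eps\psi_\infty)\dd x.
\end{align*}

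Next I set $A:=\dff(2u+U+\eps\psi_\infty)$ and $C:=\dff(2v+V-\eps\psi_\infty)$, so that $\dff(2u+U+\eps\psi)=A+\eps\dff(\psi-\psi_\infty)$ and $\dff(2v+V-\eps\psi)=C-\eps\dff(\psi-\psi_\infty)$. Expanding gives
\begin{align*}
\mathrm{D}^{\lyp}\ent(u,v)=\int_\Rdrei\bigl(u|A|^2+v|C|^2\bigr)\dd x+\eps\!\int_\Rdrei(uA-vC)\cdot\dff(\psi-\psi_\infty)\dd x.
\end{align*}
The cross term is controlled by Young's inequality with parameter $\eps$ in the form $a\cdot b\ge -\tfrac{1}{2}|a|^2-\tfrac{1}{2}|b|^2$ applied to the pairs $(\sqrt{\eps u}\,A,\sqrt{\eps u}\,\dff(\psi-\psi_\infty))$ and $(\sqrt{\eps v}\,C,\sqrt{\eps v}\,\dff(\psi-\psi_\infty))$:
\begin{align*}
\eps\!\int_\Rdrei(uA-vC)\cdot\dff(\psi-\psi_\infty)\dd x\ge -\frac{\eps}{2}\!\int_\Rdrei\bigl(u|A|^2+v|C|^2\bigr)\dd x-\frac{\eps}{2}\!\int_\Rdrei(u+v)|\dff(\psi-\psi_\infty)|^2\dd x,
\end{align*}
which is precisely what is needed to assemble $\dsp(u,v)$ as defined in \eqref{eq:dsp}.

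The only non-routine point is the rigorous justification of the differentiation of $\ent$ along $\flow{}{\lyp}$ and the concomitant integrations by parts, since the minimizers are only known to lie in $W^{1,2}$. I would handle this in the usual way: the $\lyp$-flow is a pair of decoupled drift-diffusion equations of porous-medium type with smooth, bounded-Hessian drifts $U+\eps\psi_\infty$ and $V-\eps\psi_\infty$ (by Theorem \ref{thm:stat} and Proposition \ref{prop:lyp}(a)), so standard parabolic regularization instantly smooths the solution, the above integration by parts is legitimate for $h>0$, and the resulting inequality passes to $\limsup_{h\searrow0}$ by lower semicontinuity of the right-hand side of \eqref{eq:dsp} (equivalently, one may argue with the Wasserstein subdifferential of $\ent$, cf. \cite[Ch.~10]{savare2008}).
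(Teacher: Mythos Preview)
Your proposal is correct and follows essentially the same route as the paper: apply the flow interchange lemma with $\mathcal{B}=\lyp$ and $\mathcal{A}=\ent$, compute the dissipation by differentiating $\ent$ along the auxiliary flow, split $\dff(2u+U+\eps\psi)=\dff(2u+U+\eps\psi_\infty)+\eps\dff(\psi-\psi_\infty)$, and absorb the cross term by Young's inequality. The algebra is identical to the paper's.

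The only noteworthy difference is in the rigor of the regularity step. You appeal to ``standard parabolic regularization'' of the $\lyp$-flow, but since \eqref{eq:lypflow} is a \emph{degenerate} porous-medium-type system, it does not enjoy instantaneous $C^\infty$ smoothing in the classical sense; solutions can retain compact support and lose regularity at the free boundary. The paper handles this explicitly by perturbing $\lyp$ with a small multiple of Boltzmann's entropy, $\lyp_\nu:=\lyp+\nu\calH(u)+\nu\calH(v)$, which turns the auxiliary flow into a \emph{strictly} parabolic system (an additional $\nu\Delta$ appears). One then computes the dissipation along this regularized flow, controls the viscosity contribution by $-\nu(\|\Delta U\|_{L^\infty}+\|\Delta V\|_{L^\infty})$, and passes to the limit $\nu\searrow 0$ using lower semicontinuity. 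Your parenthetical remark ``a standard regularization/truncation argument closes any integrability gaps'' is pointing in the right direction, but the paper's concrete choice of Boltzmann-entropy regularization is what actually makes the integration by parts legitimate; it would be worth naming this device rather than leaving it implicit.
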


\begin{proof}
To justify the calculations below, we regularize the flow given by \eqref{eq:lypflow}. Define, for $\nu>0$ and $(u,v)\in\X\cap (L^2\times L^2)$ the regularized functional
\begin{align*}
\lyp_\nu(u,v)&:=\lyp(u,v)+\nu\calH(u)+\nu\calH(v),
\end{align*}
with \emph{Boltzmann's entropy} $\calH(w):=\int_\Rdrei w\log w\dd x$, which is finite on $\prb\cap L^2$ (cf. e.g. \cite[Lemma 5.3]{zinsl2012}). Furthermore, by Theorem \ref{thm:gfcoll}(a), $\calH$ is $0$-geodesically convex on $\prb$, so $\lyp_\nu$ is $\lambda_\eps$-geodesically convex w.r.t. $\metr$ and the associated evolution equation to its $\lambda_\eps$-flow $(\aU,\aV)$ is the strictly parabolic, decoupled system
\begin{align}
\begin{split}
\partial_s \aU&=\nu\Delta\aU+\dv\left[\aU\dff(2\aU+U+\eps\psi_\infty)\right],\\
\partial_s \aV&=\nu\Delta\aV+\dv\left[\aV\dff(2\aV+V-\eps\psi_\infty)\right].
\end{split}
\label{eq:nuflow}
\end{align}
Let $(u,v)\in \X\cap(W^{1,2}\times W^{1,2})$. At least for small $s>0$, system \eqref{eq:nuflow} has a smooth and nonnegative solution $(\aU,\aV)$ such that $(\aU(s),\aV(s))\rightarrow (u,v)$ both strongly in $L^2\times L^2$ and $\metr$, as well as weakly in $W^{1,2}\times W^{1,2}$, for $s\searrow 0$. Moreover, this local flow can be identified with the $\lambda_\eps$-flow associated to $\lyp_\nu$ (see e.g. \cite[Thm.~11.2.8]{savare2008}). Then, writing $\Psi:=\krnl\ast(\aU-\aV)$ for brevity:
\begin{align*}
-\frac{\dd}{\dd s}\ent(\aU,\aV)=&-\int_\Rdrei [2\aU+U+\eps\Psi]\dv\left[\nu\dff\aU+\aU\dff(2\aU+U+\eps\psi_\infty)\right]\dd x\\&-\int_\Rdrei[2\aV+V-\eps\Psi]\dv\left[\nu\dff\aV+\aV\dff(2\aV+V-\eps\psi_\infty)\right]\dd x.
\end{align*}
We first focus on the viscosity terms and obtain, using that $(\aU,\aV)\in\X$:
\begin{align*}
&\quad-\int_\Rdrei \big([2\aU+U+\eps\Psi]\Delta\aU+[2\aV+V-\eps\Psi]\Delta\aV\big)\dd x\\
&=\int_\Rdrei\big(2|\dff\aU|^2+2|\dff\aV|^2-\aU\Delta U-\aV\Delta V-\eps(\aU-\aV)\Delta\Psi\big)\dd x\\
&= 2\|\dff\aU\|_{L^2}^2+2\|\dff\aV\|_{L^2}^2-\int_\Rdrei(\aU\Delta U+\aV\Delta V)\dd x+\eps\|\aU-\aV\|_{L^2}^2\\
&\ge -\|\Delta U\|_{L^\infty}-\|\Delta V\|_{L^\infty}.
\end{align*}

The remaining terms can be rewritten as
\begin{align*}
&\quad-\int_\Rdrei [2\aU+U+\eps\Psi]\dv\left[\aU\dff(2\aU+U+\eps\psi_\infty)\right]\dd x\\&\qquad-\int_\Rdrei[2\aV+V-\eps\Psi]\dv\left[\aV\dff(2\aV+V-\eps\psi_\infty)\right]\dd x\\
&=\int_\Rdrei \aU|\dff(2\aU+U+\eps\psi_\infty)|^2\dd x+\int_\Rdrei \aV|\dff(2\aV+V-\eps\psi_\infty)|^2\dd x\\
&\quad+\eps\int_\Rdrei \aU \dff(2\aU+U+\eps\psi_\infty)\cdot\dff(\Psi-\psi_\infty)\dd x\\&\quad-\eps\int_\Rdrei \aV \dff(2\aV+V-\eps\psi_\infty)\cdot\dff(\Psi-\psi_\infty)\dd x\\
&\ge \left(1-\frac{\eps}{2}\right)\int_\Rdrei\big(\aU|\dff(2\aU+U+\eps\psi_\infty)|^2+\aV|\dff(2\aV+V-\eps\psi_\infty)|^2\big)\dd x\\&\quad-\frac{\eps}{2}\int_\Rdrei (\aU+\aV)|\dff(\Psi-\psi_\infty)|^2\dd x,
\end{align*}
using Young's inequality in the final step. All in all, we arrive at
\begin{align*}
-\frac{\dd}{\dd s}\ent(\aU,\aV)&\ge \dsp(\aU,\aV)-\nu\big(\|\Delta U\|_{L^\infty}+\|\Delta V\|_{L^\infty}\big).
\end{align*}
Observing that the terms appearing in $\dsp$ are lower semicontinuous w.r.t. the convergence of $(\aU,\aV)\to(u,v)$ above, we obtain after passage to the limits $s\searrow 0$ and $\nu\searrow 0$ that $\dff^{\lyp}\ent(u,v)\ge \dsp(u,v)$.
Application of the flow interchange lemma (Theorem \ref{thm:flowinterchange}) completes the proof of \eqref{eq:fint}.
\end{proof}

The remaining task will be to establish appropriate bounds on the dissipation $\dsp(u_\tau^n,v_\tau^n)$ in terms of $\lyp(u_\tau^n,v_\tau^n)$ in order to apply a discrete Gronwall lemma and to conclude exponential convergence. Note that, in view of \eqref{eq:subdiff}, it will be enough to control the second part of $\dsp(u_\tau^n,v_\tau^n)$.

%%%%%%%%%%%%%%%%%%
%
\section{Convergence to equilibrium}\label{sec:conv}
In this section, we complete the proof of Theorem \ref{thm:conv}. Our strategy is as follows: First, we derive a uniform bound (independent of $\eps$ and the initial condition) on the auxiliary entropy $\lyp$ for sufficiently large times. This brings us into position to prove a refined estimate on the dissipation $\dsp$ strong enough to infer exponential convergence of $\lyp$ to zero.

In the following, for $\tau>0$, we denote by $(u_\tau^n,v_\tau^n)_{n\in\N}$ a sequence given by the minimizing movement scheme \eqref{eq:jko}.
\subsection{Boundedness of auxiliary entropy}
We first need an additional estimate for the dissipation terms in $\eqref{eq:dsp}$.
\begin{lemma}
There exists a constant $\theta>0$ such that for all $\eps\in(0,\eps_0)$ and all $(u,v)\in\X\cap (W^{1,2}\times W^{1,2})$:
\begin{align}
\label{eq:L3D}
\begin{split}
\|u\|_{L^3}^4&\le \theta\left(1+\int_{\Rdrei}u|\dff(2u+U+\eps\psi_\infty)|^2\dd x\right),\\
\|v\|_{L^3}^4&\le \theta\left(1+\int_{\Rdrei}v|\dff(2v+V-\eps\psi_\infty)|^2\dd x\right),
\end{split}
\end{align}
with the convention that the respective right-hand side is equal to $+\infty$ if \break $u|\dff(2u+U+\eps\psi_\infty)|^2$ or $v|\dff(2v+V-\eps\psi_\infty)|^2$ is not integrable.
\end{lemma}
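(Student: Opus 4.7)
The estimate is symmetric in the two equations, so I only treat the bound for $u$ (the one for $v$ follows identically, replacing $\Phi_u := U + \eps\psi_\infty$ by $\Phi_v := V - \eps\psi_\infty$). The plan is to first convert $\int u|\dff(2u+\Phi_u)|^2\dd x$ into a lower bound for $\int u|\dff u|^2 \dd x$ modulo an $L^2$-remainder, and then to bound $\|u\|_{L^3}^4$ above by $\int u|\dff u|^2\dd x$ via a porous-medium type Sobolev inequality, finally absorbing the remainder by Young.

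Step 1. Expanding the square and integrating by parts via $2u\,\dff u = \dff(u^2)$ yields the identity
\begin{align*}
\int_\Rdrei u|\dff(2u+\Phi_u)|^2 \dd x
&= 4\int_\Rdrei u|\dff u|^2 \dd x + 2\int_\Rdrei \dff(u^2)\cdot\dff\Phi_u \dd x + \int_\Rdrei u|\dff\Phi_u|^2 \dd x \\
&= 4\int_\Rdrei u|\dff u|^2 \dd x - 2\int_\Rdrei u^2\,\Delta\Phi_u \dd x + \int_\Rdrei u|\dff\Phi_u|^2 \dd x.
\end{align*}
Dropping the nonnegative last term,
\begin{align*}
4\int_\Rdrei u|\dff u|^2 \dd x \le \int_\Rdrei u|\dff(2u+\Phi_u)|^2 \dd x + 2\|\Delta\Phi_u\|_{L^\infty}\|u\|_{L^2}^2.
\end{align*}
The assumption on $U$ gives $\|\Delta U\|_{L^\infty}<\infty$, while Proposition \ref{prop:lyp}(a) provides a bound on $\|\dff^2\psi_\infty\|_{L^\infty}$ that is uniform in $\eps\in(0,\eps_0)$; hence $\|\Delta\Phi_u\|_{L^\infty}\le M$ uniformly in $\eps$.

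Step 2. Since $u\in W^{1,2}(\R^3)\hookrightarrow L^6(\R^3)$, the function $u^{3/2}$ lies in $W^{1,2}(\R^3)$ with $\dff u^{3/2} = \tfrac32 u^{1/2}\dff u$, so $\|\dff u^{3/2}\|_{L^2}^2 = \tfrac94 \int_\Rdrei u|\dff u|^2\dd x$. The Gagliardo-Nirenberg-Sobolev inequality $\|w\|_{L^6}\le C_S\|\dff w\|_{L^2}$ applied to $w = u^{3/2}$ gives $\|u\|_{L^9}^3 \le \tfrac{9C_S^2}{4}\int_\Rdrei u|\dff u|^2\dd x$. Interpolating $L^3$ between $L^1$ and $L^9$ with $\tfrac13 = \tfrac14\cdot 1 + \tfrac34\cdot\tfrac19$, and using $\|u\|_{L^1}=1$, yields $\|u\|_{L^3}^4 \le \|u\|_{L^1}\|u\|_{L^9}^3 = \|u\|_{L^9}^3$.

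Step 3. Combining Steps 1 and 2, there is a constant $C_1>0$ independent of $\eps$ such that
\begin{align*}
\|u\|_{L^3}^4 \le C_1\int_\Rdrei u|\dff(2u+\Phi_u)|^2\dd x + C_1\|u\|_{L^2}^2.
\end{align*}
Interpolating again between $L^1$ and $L^3$ gives $\|u\|_{L^2}^2 \le \|u\|_{L^3}^{3/2}$, and Young's inequality with conjugate exponents $(8/3, 8/5)$ yields $\|u\|_{L^3}^{3/2}\le \tfrac{1}{2C_1}\|u\|_{L^3}^4 + C_2$ for some $C_2>0$. Absorbing $\tfrac12\|u\|_{L^3}^4$ into the left-hand side finishes the proof with $\theta := \max(2C_1, 2C_1 C_2)$. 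The primary obstacle is the cross term $4\int u\,\dff u\cdot\dff\Phi_u\dd x$: because $|\dff U|$ grows linearly at infinity, the naive pointwise estimate $|2\dff u+\dff\Phi_u|^2\ge 2|\dff u|^2-|\dff\Phi_u|^2$ would leave an unbounded remainder $\int u|\dff\Phi_u|^2\dd x$ for arbitrary admissible $u$. The integration by parts in Step 1 is the key step that trades this uncontrolled first-order quantity for the second-order expression $\int u^2 \Delta\Phi_u\dd x$, which is bounded uniformly in $\eps$ precisely thanks to Proposition \ref{prop:lyp}(a).
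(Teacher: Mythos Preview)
Your proof is correct and follows essentially the same route as the paper: expand the square, integrate by parts to replace the cross term by $-2\int u^2\Delta\Phi_u\,\dd x$, then combine the Gagliardo--Nirenberg--Sobolev inequality for $u^{3/2}$ with the $L^1$--$L^9$ interpolation.

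The only notable difference lies in how the remainder $\|u\|_{L^2}^2$ is absorbed. You interpolate $\|u\|_{L^2}^2\le\|u\|_{L^3}^{3/2}$ and use Young's inequality to swallow it into the left-hand side $\|u\|_{L^3}^4$. The paper instead writes $\|u\|_{L^2}^2\le 2\|u_\infty\|_{L^2}^2+2\|u-u_\infty\|_{L^2}^2$ and bounds the second summand via the lower and upper estimates in \eqref{eq:lypest}, thereby controlling it again by the dissipation integral. Both treatments work; yours is slightly more self-contained (it does not rely on Proposition~\ref{prop:lyp}(b) and hence not on the identification of the minimizer), while the paper's treatment avoids the extra Young step. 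For the Laplacian bound you invoke Proposition~\ref{prop:lyp}(a); the paper simply uses $-\Delta\psi_\infty=u_\infty-v_\infty\in L^\infty$ from Theorem~\ref{thm:stat}, which is all that is needed since only $\Delta\Phi_u$, not the full Hessian, appears after the integration by parts.
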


\begin{proof}
We shall prove the statement for $u$; the other one can be shown analogously. We assume that the r.h.s. is finite. Expanding the square and integrating by parts, one has
\begin{align*}
&\quad\int_\Rdrei u|\dff(2u+U+\eps\psi_\infty)|^2\dd x\\&=\int_\Rdrei \left(\frac{16}{9}|\dff u^{3/2}|^2-2u^2\Delta(U+\eps\psi_\infty)+u|\dff(U+\eps\psi_\infty)|^2\right)\dd x.
\end{align*}
Since $\Delta U$ and $\Delta\psi_\infty=v_\infty-u_\infty$ are essentially bounded, we obtain
\begin{align*}
\frac{16}{9}\|\dff u^{3/2}\|_{L^2}^2&\le \int_\Rdrei u|\dff(2u+U+\eps\psi_\infty)|^2\dd x+C\|u\|_{L^2}^2,
\end{align*}
for some constant $C>0$. By the triangle and Young inequalities, one has $\|u\|_{L^2}^2\le 2\|u_\infty\|_{L^2}^2+2\|u-u_\infty\|_{L^2}^2$. For small $\eps>0$, we can use \eqref{eq:lypest} and arrive at
\begin{align*}
\frac{16}{9}\|\dff u^{3/2}\|_{L^2}^2&\le \int_\Rdrei \left(1+\frac{C}{\lambda_\eps}\right) u|\dff(2u+U+\eps\psi_\infty)|^2\dd x +2C\|u_\infty\|_{L^2}^2.
\end{align*}
On the other hand, with the $L^p$-interpolation and Gagliardo-Nirenberg-Sobolev inequalities, we have (recall $\|u\|_{L^1}=1$):
\begin{align*}
\|u\|_{L^3}&\le \|u\|_{L^9}^{3/4}\|u\|_{L^1}^{1/4}=\|u^{3/2}\|_{L^6}^{1/2}\le C'\|\dff u^{3/2}\|_{L^2}^{1/2}.
\end{align*}
Raising to the fourth power, we end up with \eqref{eq:L3D}.
\end{proof}

We now derive a uniform bound on $\lyp$ for large times.

\begin{proposition}[Boundedness of $\lyp$]\label{prop:lypbd}
\begin{enumerate}[(a)]
\item There exist $\eps_1\in(0,\eps_0)$, $L'>0$ and $M>0$ such that for all $\eps\in(0,\eps_1)$, all $\tau>0$ and all $n\in\N$:
\begin{align}
\label{eq:lypfastexp}
(1+2\lambda_\eps'\tau)\lyp(u_\tau^n,v_\tau^n)&\le \lyp(u_\tau^{n-1},v_\tau^{n-1})+\tau\eps M,
\end{align}
where $\lambda_\eps':=\lambda_0-L'\eps>0$.
\item Define, with the quantities from (a) and fixed, but arbitrary $\delta>0$:
\begin{align*}
&M':=\frac{M\eps_1}{2(\lambda_0-L'\eps_1)}>0 \qquad\text{and} \\ &T_0:=\max\left(0,\frac{1+2\delta}{2\lambda_\eps'}\log\frac{\ent(u^0,v^0)-\ent(u_\infty,v_\infty)+K\eps_1}{M'}\right)\ge 0,
\end{align*}
where $K>0$ is the constant from \eqref{eq:LE}. Then, there exists $\bar\tau>0$ such that for all $\eps\in(0,\eps_1)$, $\tau\in(0,\bar\tau]$ and $n\in\N$ with $n\tau\ge T_0$, one has
\begin{align}
\label{eq:lypbd}
\lyp(u_\tau^n,v_\tau^n)&\le 2M'.
\end{align}
\end{enumerate}
\end{proposition}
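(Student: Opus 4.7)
The plan is to exploit the dissipation estimate \eqref{eq:fint} from Proposition \ref{prop:dsp} and show that $\dsp(u_\tau^n,v_\tau^n) \ge 2\lambda_\eps'\lyp(u_\tau^n,v_\tau^n) - \eps M$ for suitable constants $\lambda_\eps'>0$ and $M>0$ and all sufficiently small $\eps>0$; part (a) then follows by rearranging \eqref{eq:fint}. Abbreviating
\[
F := \int_\Rdrei\bigl(u|\dff(2u+U+\eps\psi_\infty)|^2 + v|\dff(2v+V-\eps\psi_\infty)|^2\bigr)\dd x,
\]
the lower estimate in \eqref{eq:lypest} yields $F \ge 2\lambda_\eps\lyp(u,v)$, so it suffices to absorb the negative cross-term $(\eps/2)\int_\Rdrei(u+v)|\dff(\psi-\psi_\infty)|^2\dd x$ appearing in \eqref{eq:dsp} into a small fraction of $F$ together with a linear term in $\lyp$ and a constant, all of order $\eps$.

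To bound the cross term I would apply H\"older's inequality with exponents $(3,3/2)$, obtaining $\|u+v\|_{L^3}\|\dff(\psi-\psi_\infty)\|_{L^3}^2$. The first factor is at most $C(1+F)^{1/4}$ directly from \eqref{eq:L3D}. For the second, since $-\Delta(\psi-\psi_\infty) = (u-u_\infty)-(v-v_\infty)=:w$ on $\Rdrei$, the Hardy--Littlewood--Sobolev (Riesz potential) estimate delivers $\|\dff(\psi-\psi_\infty)\|_{L^3} \le C\|w\|_{L^{3/2}}$. Interpolating between $\|w\|_{L^1}\le 4$ (mass conservation) and $\|w\|_{L^2}\le\sqrt{2\lyp(u,v)}$ (from the lower bound in \eqref{eq:lypest}) gives $\|w\|_{L^{3/2}}\le C'\lyp(u,v)^{1/3}$, hence cross term $\le C''(1+F)^{1/4}\lyp^{2/3}$. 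Young's inequality with exponents $(4,4/3)$ and a tunable parameter $\eta>0$, combined with $\lyp^{8/9}\le 1+\lyp$, yields $(1+F)^{1/4}\lyp^{2/3}\le (\eta/4)(1+F)+C_\eta(1+\lyp)$. Choosing $\eta$ small (independently of $\eps$) keeps the coefficient of $F$ in $\dsp$ close to $1$, and invoking $F\ge 2\lambda_\eps\lyp$ once more produces $\dsp(u,v)\ge 2(\lambda_0-L'\eps)\lyp(u,v)-\eps M$ for suitable $L',M>0$ and all $\eps\in(0,\eps_1)$ with $\eps_1$ small enough that $\lambda_\eps':=\lambda_0-L'\eps>0$. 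This proves (a).

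For part (b), I would iterate \eqref{eq:lypfastexp} and sum the resulting geometric series to obtain
\[
\lyp(u_\tau^n,v_\tau^n) \le (1+2\lambda_\eps'\tau)^{-n}\lyp(u^0,v^0) + \frac{\eps M}{2\lambda_\eps'}\bigl(1-(1+2\lambda_\eps'\tau)^{-n}\bigr).
\]
The second summand is at most $M'$ whenever $\eps<\eps_1$. For the first, \eqref{eq:LE} applied at the initial datum yields $\lyp(u^0,v^0)\le\ent(u^0,v^0)-\ent(u_\infty,v_\infty)+K\eps_1$, so the desired bound $\lyp(u_\tau^n,v_\tau^n)\le 2M'$ reduces to ensuring $(1+2\lambda_\eps'\tau)^n \ge \alpha$, where $\alpha := (\ent(u^0,v^0)-\ent(u_\infty,v_\infty)+K\eps_1)/M'$ is precisely the argument of the logarithm in $T_0$.

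The subtle point --- and the origin of the factor $1+2\delta$ in the definition of $T_0$ --- is passing from the discrete factor $(1+2\lambda_\eps'\tau)^n$ to a clean exponential. Writing $x:=2\lambda_\eps'\tau$ and using the elementary bound $\log(1+x)\ge x-x^2/2$, one checks that $\log(1+x) \ge x/(1+2\delta)$ as soon as $x\le 4\delta/(1+2\delta)$, that is, $\tau\le \bar\tau:=2\delta/((1+2\delta)\lambda'_{\eps_1})$, uniformly in $\eps\in(0,\eps_1)$. For such $\tau$ and any $n$ with $n\tau\ge T_0$, one then obtains $(1+2\lambda_\eps'\tau)^n \ge \exp\bigl(2\lambda_\eps' n\tau/(1+2\delta)\bigr) \ge \alpha$, establishing \eqref{eq:lypbd}. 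I expect the main technical obstacle of the argument to lie in the cross-term estimate of paragraph two: a plain $L^2$/$L^2$ H\"older split does not extract any smallness for small $\eps$, whereas the $L^3$/$L^3$ split combined with the three-dimensional Riesz potential estimate makes essential use of \eqref{eq:L3D}, and this is also where the restriction to dimension $d=3$ enters the proof critically.
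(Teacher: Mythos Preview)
Your argument is correct and tracks the paper's proof closely; the overall architecture (bound the cross term via H\"older, Hardy--Littlewood--Sobolev, $L^p$-interpolation, then feed the result through \eqref{eq:L3D} and \eqref{eq:lypest}) is the same. The only real variation is in the H\"older split for part~(a): the paper first separates $\psi$ and $\psi_\infty$, then applies H\"older with exponents $(3/2,6)$ together with $\|\dff\psi\|_{L^6}\lesssim\|u-v\|_{L^2}$, ultimately bounding the cross term by $C'(\|u\|_{L^3}^4+\|v\|_{L^3}^4+1)$, which is absorbed entirely into $F$ via \eqref{eq:L3D}. Your version keeps $\psi-\psi_\infty$ intact and uses exponents $(3,3)$ with $\|\dff(\psi-\psi_\infty)\|_{L^3}\lesssim\|w\|_{L^{3/2}}\lesssim\lyp^{1/3}$, so a factor of $\lyp$ appears explicitly before the final application of \eqref{eq:lypest}. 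Both routes land on $\dsp\ge 2(\lambda_0-L'\eps)\lyp-\eps M$; yours is perhaps a touch more direct since it avoids the splitting, while the paper's has the cosmetic advantage that the tunable parameter $\eta$ is not needed (and indeed, in your version any fixed $\eta$ works, since every error term already carries a factor $\eps$). Part~(b) is effectively identical to the paper's proof.

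Two small slips worth fixing: the inequality $F\ge 2\lambda_\eps\lyp$ is the \emph{upper} estimate in \eqref{eq:lypest}, not the lower one; and your choice $\bar\tau=2\delta/((1+2\delta)\lambda'_{\eps_1})$ does not guarantee $2\lambda'_\eps\tau\le 4\delta/(1+2\delta)$ uniformly in $\eps\in(0,\eps_1)$, since $\lambda'_\eps>\lambda'_{\eps_1}$ there---replace $\lambda'_{\eps_1}$ by $\lambda_0$ (the supremum of $\lambda'_\eps$).
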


\begin{proof}
\begin{enumerate}[(a)]
\item We first estimate the last term appearing in $\dsp(u,v)$ from \eqref{eq:dsp}. Let $(u,v)\in\X\cap(W^{1,2}\times W^{1,2})$. By H\"older's inequality,
\begin{align}
\int_\Rdrei (u+v)|\dff\psi|^2\dd x&\le (\|u\|_{L^{3/2}}+\|v\|_{L^{3/2}})\|\dff\psi\|_{L^6}^2.\label{eq:hold}
\end{align}
The term involving the gradient of $\psi$ can be treated with the Hardy-Littlewood-Sobolev inequality (see for example \cite[Thm.~4.3]{lieb2001} or \cite[Lemma 3.1]{kmx2015}) which is applicable for Poisson's kernel $\krnl$:
\begin{align}
\|\dff\psi\|_{L^6}^2&\le C\|u-v\|_{L^2}^2\le 2C\|u\|_{L^2}^2+2C\|v\|_{L^2}^2,\label{eq:hard}
\end{align}
for some constant $C>0$. Combining \eqref{eq:hold}\&\eqref{eq:hard}, using $\|u\|_{L^1}=1=\|v\|_{L^1}$ again, the $L^p$-interpolation inequality yields for some $\beta,\beta'\in(0,1)$:
\begin{align*}
&\int_\Rdrei (u+v)|\dff(\psi-\psi_\infty)|^2\dd x\le 2\int_\Rdrei (u+v)|\dff\psi|^2\dd x+2\int_\Rdrei (u+v)|\dff\psi_\infty|^2\dd x\\
&\le 4C\big(\|u\|_{L^3}^\beta \|u\|_{L^3}^{2\beta'}+\|u\|_{L^3}^\beta \|v\|_{L^3}^{2\beta'}+\|v\|_{L^3}^\beta \|u\|_{L^3}^{2\beta'}+\|v\|_{L^3}^\beta \|v\|_{L^3}^{2\beta'}\big)\\&\quad+2\int_\Rdrei (u+v)|\dff\psi_\infty|^2\dd x\\
&\le C'(\|u\|_{L^3}^4+\|v\|_{L^3}^4+1),
\end{align*}
for some $C'>0$, by Young's inequality and thanks to finiteness of $\|\dff\psi_\infty\|_{L^\infty}$. Now, we apply \eqref{eq:L3D} and obtain 
\begin{align*}
\dsp(u,v)&\ge \left(1-\frac{\eps}{2}(1+C'')\right)\int_\Rdrei\big(u|\dff(2u+U+\eps\psi_\infty)|^2+v|\dff(2v+V-\eps\psi_\infty)|^2\big)\dd x\\&\quad-\eps M,
\end{align*}
for suitable $C''>0$ and $M>0$. For $\eps<\frac{2}{1+C''}$, we further conclude by \eqref{eq:lypest} that
\begin{align*}
\dsp(u,v)&\ge 2\lambda_\eps\left(1-\frac{\eps}{2}(1+C'')\right)\lyp(u,v)-\eps M.
\end{align*}
Insertion into \eqref{eq:fint} yields (a).
\item We first prove the following explicit estimate for all $\tau>0$ and $n\in\N\cup\{0\}$ by induction over $n$:
\begin{align}
\label{eq:lypind}
\begin{split}
\lyp(u_\tau^n,v_\tau^n)&\le (\ent(u^0,v^0)-\ent(u_\infty,v_\infty)+K\eps_1)(1+2\lambda_\eps'\tau)^{-n}\\&\quad+\frac{M\eps}{2\lambda_\eps'}(1-(1+2\lambda_\eps'\tau)^{-n}).
\end{split}
\end{align}
Indeed, the claim holds for $n=0$ thanks to \eqref{eq:LE}. If it holds for an arbitrary $n\in\N\cup\{0\}$, we obtain with \eqref{eq:lypfastexp}:
\begin{align*}
&\lyp(u_\tau^{n+1},v_\tau^{n+1})\le (1+2\lambda_\eps'\tau)^{-1}\lyp(u_\tau^n,v_\tau^n)+(1+2\lambda_\eps'\tau)^{-1}\tau\eps M\\
&\le (1+2\lambda_\eps'\tau)^{-(n+1)}(\ent(u^0,v^0)-\ent(u_\infty,v_\infty)+K\eps_1)\\&\quad+\frac{M\eps}{2\lambda_\eps'}(1+2\lambda_\eps'\tau)^{-1}(1-(1+2\lambda_\eps'\tau)^{-n})+(1+2\lambda_\eps'\tau)^{-1}\tau\eps M\\
&=(\ent(u^0,v^0)-\ent(u_\infty,v_\infty)+K\eps_1)(1+2\lambda_\eps'\tau)^{-(n+1)}+\frac{M\eps}{2\lambda_\eps'}(1-(1+2\lambda_\eps'\tau)^{-(n+1)}).
\end{align*}
Let now $\tau>0$ and $n\in\N$ with $n\tau\ge T_0$. Thanks to \eqref{eq:lypind}, for each $\delta>0$,
\begin{align*}
\lyp(u_\tau^n,v_\tau^n)&\le \frac{M\eps}{2\lambda_\eps'}+(\ent(u^0,v^0)-\ent(u_\infty,v_\infty)+K\eps_1)\exp\left(-\frac{n\tau}{\tau}\log(1+2\lambda_\eps'\tau)\right)
\\&\le (\ent(u^0,v^0)-\ent(u_\infty,v_\infty)+K\eps_1)\exp\left(-\frac{T_0}{\tau}\log(1+2\lambda_\eps'\tau)\right)+M'.
\end{align*}
Obviously, we obtain \eqref{eq:lypbd} in the case $\ent(u^0,v^0)-\ent(u_\infty,v_\infty)+K\eps_1\le M'$.
Consider the converse case. Since $\lim\limits_{s\to 0}\frac{\log(1+s)}{s}=1$, there exists $\bar s>0$ such that 
$\frac{\log(1+s)}{s}\ge \frac{1}{1+2\delta}$ for all $s\in(0,\bar s]$.
Henceforth, defining $\bar\tau:=\frac{\bar s}{2\lambda_0}$ yields
$\frac{\log(1+2\lambda_\eps'\tau)}{2\lambda_\eps'\tau}\ge \frac{1}{1+2\delta}$ for all $\tau\in(0,\bar\tau]$, and we arrive at the desired estimate by definition of $T_0$:
\begin{align*}
&\quad\lyp(u_\tau^n,v_\tau^n)\\&\le M'+(\ent(u^0,v^0)-\ent(u_\infty,v_\infty)+K\eps_1)\exp\left(-\log\frac{\ent(u^0,v^0)-\ent(u_\infty,v_\infty)+K\eps_1}{M'}\right)\\&= 2M'.\qedhere
\end{align*}
\end{enumerate}
\end{proof}

\subsection{Exponential convergence to equilibrium}
We are now able to prove -- for sufficiently large times -- a refined version of Proposition \ref{prop:lypbd}(a):
\begin{proposition}[Exponential estimate for $\lyp$]\label{prop:lypexp}
There exist constants $\bar\eps\in (0,\eps_1)$ and $\bar L>0$ such that for arbitrary $\delta>0$, there exists $\bar\tau>0$ such that for all $\eps\in (0,\bar\eps)$, $\tau\in(0,\bar\tau]$ and $n\in\N$ with $n\tau\ge T_0$, we have
\begin{align}\label{eq:lypexp}
(1+2\Lambda_\eps\tau)\lyp(u_\tau^n,v_\tau^n)&\le \lyp(u_\tau^{n-1},v_\tau^{n-1}),
\end{align}
with $\Lambda_\eps:=\lambda_0-\bar L\eps>0$ and $T_0$ as in Proposition \ref{prop:lypbd}(b).
\end{proposition}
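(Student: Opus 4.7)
The strategy is to sharpen Proposition \ref{prop:lypbd}(a) by exploiting the uniform a priori bound $\lyp(u_\tau^n,v_\tau^n) \le 2M'$ supplied by Proposition \ref{prop:lypbd}(b) as soon as $n\tau \ge T_0$. In the earlier estimate the problematic coupling integral $\int_\Rdrei (u+v)|\dff(\psi - \psi_\infty)|^2 \dd x$ was bounded via \eqref{eq:L3D} by $C'(\|u\|_{L^3}^4 + \|v\|_{L^3}^4 + 1)$, producing the additive $\eps M$ error. Once $\lyp$ itself is known to be bounded, one can instead control this term \emph{linearly} in $\lyp(u,v)$ with an $\eps$-independent constant, turning the additive error into a multiplicative refinement of the convexity modulus.

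Concretely, first I would fix $\eps \in (0,\eps_1)$, $\tau \in (0,\bar\tau]$ and $n \in \N$ with $n\tau \ge T_0$, and combine the lower bound in \eqref{eq:lypest} with Proposition \ref{prop:lypbd}(b) to obtain $\|u_\tau^n - u_\infty\|_{L^2}^2 + \|v_\tau^n - v_\infty\|_{L^2}^2 \le \lyp(u_\tau^n,v_\tau^n) \le 2M'$. In particular, $\|u_\tau^n\|_{L^2}$ and $\|v_\tau^n\|_{L^2}$ are bounded by a constant depending only on $M'$ and on the $L^2$-norms of $u_\infty,v_\infty$ -- crucially, independent of $\eps$ and of the initial datum.

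Second, I would bound the coupling term directly. Using H\"older's inequality as in \eqref{eq:hold} and then Hardy--Littlewood--Sobolev applied to $\psi - \psi_\infty = \krnl*((u - u_\infty) - (v - v_\infty))$ in the spirit of \eqref{eq:hard}, one gets
\[
\|\dff(\psi - \psi_\infty)\|_{L^6}^2 \le C_0 \|(u - u_\infty) - (v - v_\infty)\|_{L^2}^2 \le 2 C_0 \lyp(u,v),
\]
where the last inequality uses the lower bound in \eqref{eq:lypest}. The norms $\|u\|_{L^{3/2}}$ and $\|v\|_{L^{3/2}}$ are controlled by $L^p$-interpolation between $L^1$ (mass one) and the uniform $L^2$-bound from Step~1, yielding
\[
\int_\Rdrei (u_\tau^n + v_\tau^n)|\dff(\psi - \psi_\infty)|^2 \dd x \le C_1 \lyp(u_\tau^n,v_\tau^n),
\]
with $C_1 > 0$ independent of $\eps$ and of the initial condition.

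Finally, combining this with the lower bound from \eqref{eq:lypest} applied to the principal part of $\dsp$ yields $\dsp(u_\tau^n,v_\tau^n) \ge \bigl(2(1-\eps/2)\lambda_\eps - \eps C_1/2\bigr) \lyp(u_\tau^n,v_\tau^n) =: 2\Lambda_\eps \lyp(u_\tau^n,v_\tau^n)$. A direct expansion gives $\Lambda_\eps = \lambda_0 - \bar L\eps + O(\eps^2)$ for a suitable $\bar L > 0$ depending on $L$, $\lambda_0$ and $C_1$, so for $\bar\eps \le \eps_1$ sufficiently small one has $\Lambda_\eps > 0$. Inserting into the flow-interchange inequality \eqref{eq:fint} produces \eqref{eq:lypexp}. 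The main obstacle in the argument is ensuring that $C_1$ in Step~2 does not depend on $\eps$ or on the initial datum -- which is precisely why the large-time a priori bound of Proposition \ref{prop:lypbd}(b) had to be established first.
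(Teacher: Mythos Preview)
Your proposal is correct and follows essentially the same route as the paper: estimate $\|\dff(\psi-\psi_\infty)\|_{L^6}^2$ linearly by $\lyp(u,v)$ via Hardy--Littlewood--Sobolev and the lower bound in \eqref{eq:lypest}, control the prefactor $\|u\|_{L^{3/2}}+\|v\|_{L^{3/2}}$ uniformly (independently of $\eps$ and the datum) using the large-time bound \eqref{eq:lypbd}, and insert the resulting inequality $\int(u+v)|\dff(\psi-\psi_\infty)|^2\dd x\le C_1\lyp(u,v)$ into \eqref{eq:fint}. The paper's only cosmetic difference is that it first writes $u+v=(u-u_\infty)+(v-v_\infty)+(u_\infty+v_\infty)$ and interpolates the $L^{3/2}$-norms of the differences, arriving at the bound $2CC'(1+2M')\lyp(u,v)$, whereas you bound $\|u\|_{L^2}$ first and then interpolate---the two are equivalent.
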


\begin{proof}
We write $(u,v)$ instead of $(u_\tau^n,v_\tau^n)$ for the sake of clarity and consider the last term in $\dsp(u,v)$ once more. Using as in the proof of Proposition \ref{prop:lypbd}(a) the H\"older, Hardy-Littlewood-Sobolev and $L^p$-interpolation inequalities (cf. \eqref{eq:hold}\&\eqref{eq:hard}), we get for some $C,C'>0$ and $\beta\in(0,1)$:
\begin{align*}
&\quad\int_\Rdrei (u+v)|\dff(\psi-\psi_\infty)|^2\dd x\\
&=\int_\Rdrei ((u-u_\infty)+(v-v_\infty)+(u_\infty+v_\infty))|\dff(\psi-\psi_\infty)|^2\dd x\\
&\le C\|(u-u_\infty)-(v-v_\infty)\|_{L^2}^2\\&\quad\cdot\big(\|u-u_\infty\|_{L^2}^\beta \|u-u_\infty\|_{L^1}^{1-\beta}+\|v-v_\infty\|_{L^2}^\beta \|v-v_\infty\|_{L^1}^{1-\beta}+\|u_\infty+v_\infty\|_{L^{3/2}}\big)\\
&\le C\cdot 2\lyp(u,v)\cdot C'(1+\lyp(u,v))\le 2CC'(1+2M') \lyp(u,v),
\end{align*}
with Young's inequality, \eqref{eq:lypest} and \eqref{eq:lypbd}. Now, \eqref{eq:lypexp} follows thanks to \eqref{eq:fint}, for sufficiently small $\eps>0$.
\end{proof}

Finally, we prove Theorem \ref{thm:conv}.
\begin{proof}
Consider a vanishing sequence $(\tau_k)_{k\in\N}$ such that the corresponding sequence of discrete solutions $(u_{\tau_k},v_{\tau_k})_{k\in\N}$ converges to a weak solution to \eqref{eq:pnp_u}--\eqref{eq:poi}, in the sense of Theorem \ref{thm:exist}. Lower semicontinuity yields for all $t\ge 0$: \break $\lyp(u(t),v(t))\le \liminf\limits_{k\to\infty} \lyp(u_{\tau_k}(t),v_{\tau_k}(t))$. By \eqref{eq:LE} and the monotonicity of $\ent$ from Proposition \ref{prop:minmov}, one obtains after passage to $k\to\infty$ that
\begin{align}
\label{eq:lcbd}
\lyp(u(t),v(t))&\le \ent(u^0,v^0)-\ent(u_\infty,v_\infty)+K\eps_1\qquad\forall t\ge 0.
\end{align}
Iterating the estimate \eqref{eq:lypexp}, assuming without loss of generality that $k\in\N$ is sufficiently large, we get in the limit $k\to\infty$ that
\begin{align}
\label{eq:lcexp}
\lyp(u(t),v(t))&\le 2M'\exp(-2\Lambda_\eps (t-T_0))\qquad\forall t\ge T_0.
\end{align}
Actually, \eqref{eq:lcbd}\&\eqref{eq:lcexp} imply that $\lyp(u(t),v(t))\le A\exp(-2\Lambda_\eps t)$ for all $t\ge 0$, with some constant $A>0$, the particular structure of which remaining to be identified. Consider the case $\ent(u^0,v^0)-\ent(u_\infty,v_\infty)+K\eps_1\le M'$. Then $T_0=0$, so \eqref{eq:lcexp} holds for all $t\ge 0$. In the other case, combining \eqref{eq:lcbd}\&\eqref{eq:lcexp} yields for all $t\ge 0$:
\begin{align*}
\lyp(u(t),v(t))&\le \max(\ent(u^0,v^0)-\ent(u_\infty,v_\infty)+K\eps_1,2M')\exp(2\Lambda_\eps T_0)\exp(-2\Lambda_\eps t).
\end{align*}
We insert the definition of $T_0$ and use that $\Lambda_\eps\le\lambda_\eps'$ to find
\begin{align*}
\lyp(u(t),v(t))&\le \max\left((\ent(u_0,v_0)-\ent(u_\infty,v_\infty)+K\eps_1) ,2M'\right)\\&\quad\cdot\left(\frac{\ent(u_0,v_0)-\ent(u_\infty,v_\infty)+K\eps_1}{M'}\right)^{1+2\delta}\exp(-2\Lambda_\eps t).
\end{align*}
Combining both cases yields
\begin{align*}
\lyp(u(t),v(t))&\le \max(\ent(u_0,v_0)-\ent(u_\infty,v_\infty)+K\eps_1,2M')\\&\quad\cdot\max\left(1,\frac{\ent(u_0,v_0)-\ent(u_\infty,v_\infty)+K\eps_1}{M'}\right)^{1+2\delta}\exp(-2\Lambda_\eps t).
\end{align*}
Thus, we can find $\tilde C_\delta>0$ such that for all $t\ge 0$, the following holds:
\begin{align*}
\lyp(u(t),v(t))&\le \tilde C_\delta(\ent(u^0,v^0)-\ent(u_\infty,v_\infty)+1)^{2(1+\delta)}\exp(-2\Lambda_\eps t).
\end{align*}
From here, the desired exponential estimate \eqref{eq:conv} follows by means of \eqref{eq:LE} and \eqref{eq:subdiff}.
\end{proof}

\providecommand{\href}[2]{#2}
\providecommand{\arxiv}[1]{\href{http://arxiv.org/abs/#1}{arXiv:#1}}
\providecommand{\url}[1]{\texttt{#1}}
\providecommand{\urlprefix}{URL }

\end{document}